\def\cal{\mathcal}
\newcommand{\EE}{{\mathbb E}}
\newcommand{\ZZ}{{\mathbb Z}}
\newcommand{\NN}{{\mathbb N}}
\newcommand{\PP}{{\mathbb P}}
\newcommand{\tPP}{\widetilde{\mathbb P}}
\newcommand{\tEE}{\widetilde{\mathbb E}}
\newcommand{\RR}{{\mathbb R}}
\newcommand{\A}{{\cal A}}
\newcommand{\D}{{\cal D}}
\newcommand{\E}{{\cal E}}
\newcommand{\G}{{\cal G}}
\newcommand{\I}{{\cal I}}
\newcommand{\R}{{\cal R}}
\newcommand{\V}{{\cal V}}
\newcommand{\X}{{\mathfrak{X}}}
\newcommand{\om}{{\omega}}
\newcommand{\oN}{{\overline{N}}}
\newcommand{\tp}{{\widetilde{p}}}
\newcommand{\tq}{{\widetilde{q}}}
\newcommand{\hN}{{\widehat{N}}}
\newcommand{\hY}{{\widehat{Y}}}
\newcommand{\hx}{{\widehat{x}}}
\newcommand{\hy}{{\widehat{y}}}
\newcommand{\hm}{{\widehat{m}}}
\newcommand{\hI}{{\widehat{I}}}
\newcommand{\wx}{{\widetilde{x}}}
\newcommand{\mb}{\mathfrak{b}}
\newcommand{\mt}{\mathfrak{t}}
\newcommand{\mh}{\mathfrak{h}}
\newcommand{\hmb}{\widehat{\mathfrak{b}}}
\newcommand{\me}{\mathfrak{e}}
\newcommand{\mE}{\mathfrak{E}}
\newcommand{\mx}{\mathfrak{w}}
\newcommand{\mC}{\mathfrak{C}}
\newcommand{\mP}{\mathfrak{P}}
\newcommand{\mD}{\mathfrak{D}}
\begin{document}
\newtheorem{property}{Property}

\title[Notes]{Level numbers preserving transformations on excursions
defined by random walks with state dependent jump laws}

\author{Thierry Huillet$^{1}$, Servet Martinez$^{2}$}
\address{$^{1}$Laboratoire de Physique Th\'{e}orique et Mod\'{e}lisation \\
CNRS-UMR 8089 et Universit\'{e} de Cergy-Pontoise \\  
$2$ Avenue Adolphe Chauvin \\
95302, Cergy-Pontoise, FRANCE \\
$^{2}$ Departamento de Ingenier\'{i}a Matem\'{a}tica \\
Centro Modelamiento Matem\'{a}tico\\
UMI 2807, UCHILE-CNRS \\
Casilla 170-3 Correo 3, Santiago, CHILE.\\
E-mail: Thierry.Huillet@u-cergy.fr and smartine@dim.uchile.cl}

\begin{abstract}
We study  the number of individuals per level defined by excursions of random 
walks with state dependent jump law. These level numbers
determine the probability of the excursion, and the set of 
transformations preserving the level numbers is, generically, the set 
of transformation that preserve the probability law of excursions.
We compute the number of excursions having a fixed level numbers  
and we show that the class of shifts of excursions generate all 
the excursions having a fixed level numbers.  We study the behavior
of the level numbers under the Vervaat transform and   
the  Doob transform.
\end{abstract}

\maketitle

\textbf{Running title}: Random walks and excursions.\newline

\textbf{Keywords}: \textit{Excursions, trees, probability preserving 
transformations, 
branching processes.}\newline

\textbf{MSC 2000 Mathematics Subject Classification}: 60 J 10, 60 J 80.

\medskip

\section{Introduction}

The framework of this study is given by Section $6$ of \cite{Harris} which
also constitutes our main reference. 
In his article of first passage times on random walks whose jump law 
is state dependent, Harris associated trees to excursions and show that 
for the homogeneous subcritical random walk this is the the tree of a 
linear fractional Galton 
Watson process. In our work the jump law of the random walk is also state 
depending and the homogeneous case serves to illustrate the results.

\medskip   

We study classes of transformations on the excursions and focus 
on those preserving the (occupation) level numbers of the associated trees.  
As examples of these classes are the reversed transformation and 
the shift of bridges.

\medskip

One of the motivation for this study comes from the following observation. 
The probability of  an excursion only depends on the level numbers. Moreover,
except for jumps laws satisfying some integer relation,  the probability of the
excursion also determines the level numbers. 
So the probability preserving transformation on excursions 
are, in a generic way, those preserving the level numbers.

\medskip

In Proposition \ref{propcuenta} we compute 
the number of all the excursions having a fixed level numbers, and 
in Proposition \ref{mainprop} we show 
that the family of shifts of excursions allow to recover all the excursions 
having a fixed level numbers. Its proof gives an algorithm to get all of them.  

\medskip

For the excursions attaining their height at a unique point,  
we study the Vervaat transform, determine the change of
its level numbers and compute the probability of its domain of definition. 
In the last section we study the Doob transform of the random
walk ensuring to have finite excursions a.s.  In Proposition \ref{propdoob} 
it is shown that the law of the excursions of the Doob transformed 
random walk retrieves the law of
the excursions of the original random walk conditioned to be finite.
In the homogeneous case this gives the law of the level counting process 
defined by the Vervaat transform.

\medskip

There is an important and huge literature devoted to excursions of random 
walks and finite trees, we mention \cite{ch} and all the references therein. 
The branching property on the Galton Watson tree processes 
has been shown by several authors, at this respect see 
Proposition of Section 3 in \cite{Neveu} and Theorem 2.7 in \cite{ch}.
For some counting problems of
trees we refer to \cite{Pit}, in particular to compute the number of 
trees having a fixed class of children of the nodes. 
The asymptotic properties of several counting problems of finite rooted trees  
defined by the symmetric random walk, are described in \cite{mar}.

\medskip

An important part of these notes is devoted to fix the notation on
excursions, individuals, trees, level numbers and in the excursion random 
variable.

\medskip

\section{Excursions of a random walk}

By $\ZZ_+=\{0,1,2,...\}$ we denote the nonnegative
integers and by $\NN=\{1,2,...\}$ the positive integers. By $|A|$ 
we mean the cardinality of the set $A$.

\subsection{Excursions}

A positive excursion is a finite sequence of points 
$x=x[0,\theta]=(x_n: n=0,...,\theta)$ in $\ZZ_+$ that satisfies 
$$
x_0=0=x_\theta, \; x_n\!>\!0 \hbox{ for } n\! \in \!(0,\theta),
\hbox{ with jumps  }y_n\!=\!x_{n+1}\!-\!x_n\! \in \! \{1,-1\},  
n\! \in \! [0,\theta).
$$
Its length is $\theta(x)=\theta$, which is 
an even number because $\theta/2$ is the number of
$1$ jumps which is equal to the number of $-1$ jumps. 
A negative excursion is defined analogously except that $x_n<0$
for $n\in (0,\theta)$. 

\medskip

Let $\X$ be the countable set of all excursions (in \cite{Pit} 
they are called lattice excursions), so 
$\X=\X^+\cup \X^-$ where $\X^+$ (respectively $\X^-$) is the class 
of all positive (respectively negative) excursions. The sign change
$x\to -x$ defines one-to-one mappings $\X^{+}\to \X^{-}$ and 
$\X^{-}\to \X^{+}$, that preserve the length of the excursions.
The number of excursions with a fixed length 
$|\{x\in \X: \theta(x)=\theta\}|$, is given by the Catalan numbers 
(see Section $6$ in \cite{Pit}). 

\medskip

The height $x\in \X$, denoted $H(x)$, is defined by 
$$
H(x)=\max x[0,\theta] \hbox{ for } x\in \X^+, \; 
H(x)=\min x[0,\theta]  \hbox{ for } x\in \X^-.
$$
(For negative excursions we prefer to call it height instead of depth, 
as it is the usual 
name). We have  $H(x)\le \theta/2$ and $H(-x)=-H(x)$.

\medskip

\subsection{Excursion random variable defined by a random walk}

Let $X=(X_n: n\ge 0)$ be a random walk on $\ZZ$ 
with independent jumps $Y_n=X_{n+1}-X_{n}$ taking values in $\{-1,1\}$. 
The law $\PP$ of the random walk is defined by the transition probabilities 
$p_k=p(k,k+1)\in (0,1)$ for a jump  from $k$ to $k+1$ and 
$q_k=p(k,k-1)=1-p_k$ from $k$ to $k-1$, for $k\in \ZZ$. 
The sequence of jumps
$Y=(Y_n: n\ge 0)$ satisfies $\PP(Y_n=1)=p_{X_n}$
and $\PP(Y_n=-1)=q_{X_n}$. In the {\it homogeneous} case 
the sequence of jumps is Bernoulli with 
$p_k=p$, $q_k=q=1-p$ for $k\in \ZZ$.

\medskip

Let $\PP_i$ be the law
of the walk starting from $i\in \ZZ$ and set $\PP=\PP_0$ when
the walk starts from $0$. When starting from $X_0=0$,   
the first return time to $0$ is denoted by $\Theta=\inf\{n>0: X_n=0\}$.         
The excursion random variable  
is defined only in the set $\{\Theta<\infty\}$ and it is given by 
$$
X^\Theta=X[0,\Theta]=(X_n: n\in [0,\Theta]).
$$
So, $X^\Theta$ takes values on $\X$ and it
inherits all the notation of excursions, thus 
its sign reflection is $-X^\Theta$, it has 
length $\Theta$ and height $H(X^\Theta)$. 
Even if $X_0=0$ when defining $X^\Theta$, 
by an abuse of notation we set $\PP_1$ to 
mean that the excursion is positive and 
starts from the state $1$. Similarly for negative excursions.

\medskip

The condition $\PP_1(\Theta < \infty)=1$ 
is equivalent to $\alpha_{\infty}=\infty$ with 
$\alpha_{\infty}=\lim\limits_{n\to \infty} \alpha_n$, where
\begin{equation}
\label{eqG1}
\alpha_0=1, \alpha_1=1 \hbox{ and }
\alpha_n=\sum_{i=1}^n\prod_{k=1}^{i-1} (q_k/p_k) 
\hbox{ for } n\ge 2.
\end{equation}
Let us consider the probabilities
$\beta_i=\PP(\Theta < \infty | X_k=i, k<\Theta)$  for  $i\ge 1$,
 where $k$ is any positive number that satisfies $k-i\in 2\ZZ_+$. 
We have:
\begin{equation}
\label{eqG2}
\forall i\ge 1, \; \beta_i=
\begin{cases}
1 & \hbox{ if } \alpha_{\infty}=\infty\\
1-\frac{\alpha_i}{\alpha_{\infty}}
=\frac{\sum_{j=i}^\infty\prod_{k=1}^{i-1} (q_k/p_k)}
{\sum_{j=0}^\infty\prod_{k=1}^{i-1}(q_k/p_k)} & \hbox{ if } \alpha_\infty<1. 
\end{cases}
\end{equation}  
All these computations are given in Theorem 2b in \cite{Harris}. 
For the negative excursions these quantities are defined similarly. So, 
as in (\ref{eqG1}) and (\ref{eqG2}),   
$\alpha_{-\infty}=\lim\limits_{n\to \infty}a_{-n}$ with 
$a_{-n}=\sum_{i=1}^n\prod_{k=1}^{i-1}(p_{-k}/q_{-k})$.
Also $\beta_{-i}=\PP(\Theta < \infty | X_k=-i, k<\Theta)$, with 
$k+i\in 2\ZZ_+$, satisfies:
\begin{equation}
\label{eqG2x}
\forall i\ge 1, \;  \beta_{-i}=1 \hbox{ if } \alpha_{-\infty}=\infty,  \hbox{ or }
\beta_{-i}=1-\frac{\alpha_{-i}}{\alpha_{\infty}}  
\hbox{ if } \alpha_{-\infty}<\infty.
\end{equation}

We have, 
\begin{eqnarray}
\nonumber
\PP(\Theta<\infty)=\PP(X^\Theta\in \X^+)+\PP(X^\Theta\in \X ^-) \hbox{ and }\\
\label{descx}
\PP(X^\Theta\in \X^+)=p_0 \beta_1, \; \PP(X^\Theta\in \X^-)=q_0\beta_{-1}. 
\end{eqnarray}
Then,
$$
\PP(X^\Theta\in A | X^\Theta\in \X^+)=(p_0 \beta_1)^{-1} \, \PP(X^\Theta\in A)
\hbox{ for}  A\subseteq \X^+.
$$
Hence, the law induced by the random walk on the class of positive excursions 
$\X^+$ is given by 
\begin{equation}
\label{prob1G}
P(X^\Theta=x | X^\Theta\in \X^+ )
=(p_0 \beta_1)^{-1} \, \prod_{n=0}^{\theta(x)-1}p(x_n, x_{n+1}) ,\; 
x\in \X^+.
\end{equation} 

In the homogeneous case the condition $\alpha_{\infty}=+\infty$ is 
equivalent to $p\le q$, so $\beta_i=1$ for all $i\ge 1$ and,
\begin{equation}
\label{eqG3}
\hbox{ if } p>q \hbox{ then }   \alpha_\infty=\frac{p}{p-q} \hbox{ and }
\beta_i=\left(\frac{q}{p}\right)^i, \hbox{ for } i\ge 1.
\end{equation}
Then, in this case, the probability of an excursions only depends on 
its length. For $x\in \X^+$ 
it  is $P(X^\Theta=x^\theta | X^\Theta\in \X^+)=(pq)^{\theta/2}/p$ if $p\le q$ 
and $P(X^\Theta=x^\theta | X^\Theta\in \X^+)=(pq)^{\theta/2}/q$ if $p>q$.

\medskip

Similar expressions are obtained for negative excursions.

\medskip

In the next two Sections \ref{sss0} and \ref{sss1}, we fix the notation 
of concepts introduced in \cite{Harris}.

\medskip

\subsection{Individuals: birth and death times, level}
\label{sss0}

An individual $I$ of a $x\in \X^{+}$ is characterized by a 
triple $(\mb(I),\mh(I),\mt(I))$. If there is no possible confusion we write it by 
$(\mb,\mh,\mt)$. These quantities satisfy $0\le \mb<\mt\le \theta$, 
$\mh\ge 0$ and
$$
x_{\mb}=\mh,\; y_{\mb}=1, \; \mt=\inf\{n>\mb: x_n=\mh\}. 
$$
So, $x_{\mb+1}=\mh+1, \; y_{\mt-1}=-1$.
One says that the individual $I$ is 
born at time $\mb$ and at level $\mh$, and dies at time 
$\mt$. Notice that no individual can be born 
at time $\mt-1$. For a fixed $x\in \X^+$, $I$ 
is characterized only by $\mb$, because $\mh=x_\mb$ and
$\mt$ is a function of $\mb$ and $\mh$. 
But also $I$ is characterized by its time of death $\mt$, 
because $\mh=x_\mt$ and since in $(\mb,\mt)$ no individual can be born 
at level $\mh$ one has $\mb=\max\{0\le k<n: x_k=\mh \}$. 
Then, for $n\in [0,\theta)$,  $y_n=1$ marks the birth of an individual 
at $n$ and $y_{n}=-1$ marks the death of an individual at $n+1$.
 Hence, with the 
set of times or birth or the set of times of death of individuals, one 
determines  the set of jumps and so the excursion.

\medskip

Let $\I(x)$ be the set of individuals in $x$. Its cardinal number satisfies
$|\I(x)|=\theta(x)/2$ because it is the number of $1$ jumps.

\medskip

One has $x_{\mb}=\mh=x_{\mt}$ and $x(\mb,\mt)> \mh$, so
when we shift the levels by $-\mh$, the trajectory   
$x[\mb,\mt]-\mh=(x_n-\mh: n\in [\mb,\mt])$ is an excursion. 
We denote $x(I)=x[\mb,\mt]$ and call it  the excursion of $I$ in $x$.
The life length of $I$, denoted by $\theta(I)$, is the length of the 
excursion $x(I)$ which is $\theta(x(I))=\mt-\mb$.
There is a unique individual $I_0$
born at time $\mb=0$ at level $\mh=0$ and dying at time $\mt=\theta$.
So, $x(I_0)=x$ and $\theta(I_0)=\theta(x)$.

\medskip

When needed we write $I(x)$, $\mb(I(x))$, $\mh(I(x))$
and $\mt(I(x))$ to express the dependence of individuals on 
the excursion $x$.

\medskip

Individuals can be defined also for a negative excursion $x$.
In this case, an individual $I$ satisfies 
$0\le \mb<\mt\le \theta(x)$ but now $\mh<0$, and 
$x[\mb,\mt]-\mh=(x_n-\mh: n\in [\mb,\mt])$ is a 
negative excursion.

\medskip

\subsection{Tree and order of an excursion}
\label{sss1}

Let $x\in \X^{+}$ be a positive excursion and
$I\in \I(x)$ be given by $(\mb,\mh,\mt)$. The individuals in the set
$$
\mC(I)=\{J\in \I(x): \mh(J)=h+1, \mb(J)\in [\mb+1,\mt-1] \},
$$
are called the children of $I$, and $I$ is said to be the parent of them,
we put $I=\mP(J)$ for all $J\in \mC(I)$. $I$ is a 
leaf if $\mC(I)=\emptyset$.
An individual born in $[\mb+1,\mt-1]$ is called a successor of $I$. 
For all $0\le h<\mh(I)$ there is a unique predecessor of $I$ at level $h$.
This defines a (finite) tree $T(x)$ rooted by $I_0$ and set of sites $\I(x)$.
The tree hanging from $I$ is noted $T(I)$, it is rooted by $I$ and 
its set of nodes is constituted by $I$ and all its successors.

 \medskip

The individuals born at level $\mh+1$ in $[\mb+1,\mt-1]$ are born 
at times $\{n\in [\mb+1,\mt-1]: x_n=\mh+1\}$, and they 
die before $\mt$ because the numbers of jumps $1$ and $-1$
are the same in $[\mb+1,\mt-1]$. 
The equality $|\{n\in [\mb+1,\mt-1]: x_n=\mh+1\}|=k$ expresses
that $k-1$ individuals are born in $[\mb+1,\mt-1]$ 
at level $\mh+1$ in $x$, because the passage to $\mh+1$ 
at time $\mt-1$, satisfies $y_{\mt-1}=-1$. Notice that 
$k=1$ if and only if $\mt=\mb+2$.

\medskip

The excursion of the individual $I$, $x(I)=x[\mb,\mt]$, with 
$x_{\mb}=\mh=x_{\mt}$, has the following structure: 
\begin{eqnarray*}
&{}& [\mb\!+\!1,\mt\!-\!1]=\bigcup_{i=1}^k [a_i,b_i] \hbox{ with } 
x(a_i)\!=\! \mh \!+\!1\!=\!x(b_i),\,  x(a_i,b_i)\!>\! \mh \!+\!1, i=1,..,k; \\
&{}& b_i=a_{i+1}, i=1,..,k-1; \;  x[a_i,b_i]=x(J_i), \, J_i\in \mC(I).
\end{eqnarray*}
That is, the class of excursions of all the children in $\mC(I)$ are contiguous 
in the excursion $x$. The individual $I$ is born at the site just before 
this class of excursions and dies just after this class.

\medskip

In a reciprocal way. Let $h\ge 0$. If $x_a=h+1=x_b$ and $x(a,b)\ge h+1$, then 
$x[a,b]=(x(J_1),x(J_2),..,x(J_l))$ where 
$J_1,..,J_k\in \mC(I)$ are children 
of an individual $I$ born at level $h$ and at time 
$\mb=\max\{0\le n<a: x_n=h \}$.

\medskip

Let $\I(x)=\{I_i: i=0,..,|\I|-1\}$ be endowed with the following order 
that we call level order. One puts   
\begin{equation}
\label{order}
i\!<\! j \hbox{  if }  \mh(I_i)\!<\! \mh(I_j) \hbox{ or if } \mh(I_i)\!=\!\mh(I_j)
\hbox{ and }\mb(I_i)\!<\! \mb(I_j).
\end{equation} 
Since the time of birth of the successors of an individual is bigger 
than the one the individual, then the level order and the
parent relation determine the order of the times of birth of individuals.
Finally, since the excursions of the individuals with the 
same parent are contiguous, we get
\begin{equation}
\label{presor}
\hbox{For } \mh(I_i)=\mh(I_j), I_u\in \mC(I_i), I_v\in \mC(I_j) \hbox{ we have } 
[i<j\, \Rightarrow  \, u<v].
\end{equation}
That is, the order between children of individuals at the same level   
follows the order of their parents.

\medskip

\subsection{Representation of excursions and trees}

The excursion $x\in \X^{+}$ is represented by the sequence
$$
\mx_x=(\mx_x(j): j\in [0,\theta(x)) \hbox{ with } 
\mx_x(\mb(I_i))=i=\mx_x(\mt(I_i)-1), \; I_i\in \I(x),
$$
that is well defined because for all $n\in [0,\theta(x))$ 
one has that an individual is born at $n$ or dies at $n+1$.   
Every $i\in \{0,..,,\theta(x)-1\}$ appears twice in $\mx_x$, at the time
of birth of $I_i$ and at one unit before its time of death, and
$\mx_x$ starts and finishes at the root $I_0$, $\mx_x(0)=I_0=\mx(\theta(x)-1)$. 
Then, this representation contains the same 
representation as $x$ because the jumps of $x$ are given by
\begin{equation}
\label{repseq2}
y_j=
\begin{cases}
1 & \hbox{ if } \forall 0\le k<j: \;  \mx_x(k)\neq \mx_x(j);\\
-1 & \hbox{ if } \exists 0\le k<j: \; \mx_x(k)=\mx_x(j).
\end{cases}
\end{equation}

Now, let $T$ be a tree with root $a_0$ and set of nodes 
$\A(T)=\{a_0,..,a_\ell\}$.  We denote by 
$\mC(a_i)$ the set of children of $a_i$ and 
by $\mP(a_i)$ the parent of $a_i\neq a_0$. 
The root $a_0$ is the unique node at level $0$
of $T$ and the nodes at level $h+1$ of $T$ are the children of nodes at 
level $h$. If for every node $a\in \A(T)$ that is not a leaf, 
$\mC(a)$  is totally ordered  by some relation $\preceq_a$ we say that 
the rooted tree is ordered.  
When this happens one can refer to the first or to 
the last child, or to the child following 
some child. The tree $T(x)$ defined by an excursion $x$ is a rooted ordered 
tree because the children of every individual are ordered 
by the time of their birth.

\medskip

Let us see that if $T$ is a rooted ordered tree, then it determines a unique 
excursion $x$ with $T(x)=T$.  
We first define a sequence $\mx^T=(\mx^T(j): j=0,...,\ell-1\}$
 with values in  $\A(T)$ as follows. We take $\mx^T(0)=a_0$. 
If the root is a leaf  we put $\mx^T(1)=a_0$ and the construction finishes. 
If not we put $\mx^T(1)=a$ the first child of $a_0$.  Now 
we make an inductive construction for which it holds: 
\begin{equation}
\label{claimx}
\mx^T(i+1) \hbox{ can only be } \mx^T(i),  P(\mx^T(i))
\hbox{  or belong to } \mC(\mx^T(i)).
\end{equation}
Let us  assume we have constructed $(\mx^T(j): j\le i)$, set $a=\mx^T(i)$. 

\smallskip

$\bullet$ Assume $\mx^T(i-1)=P(a)$. If $a$ is a leaf then 
we put $\mx^T(i+1)=\mx^T(i)$. 
If $a$ is not a leaf then $\mx^T(i+1)\in \mC(\mx^T(i))$ is the first child 
of $a$;

\smallskip

$\bullet$ Assume $\mx^T(i-1)=a$. In this case $a$ is a leaf. We set 
$\mx^T(i+1)=P(\mx^T(i))$ and we stop if $P(a)=a_0$;

\smallskip

$\bullet$ Assume $\mx^T(i-1)\in \mC(a)$.  If $\mx^T(i-1)$ is the last children 
of $a$, we set $\mx^T(i+1)=P(\mx^T(i))$ and we stop if $P(a)=a_0$. 
If not, we put $\mx^T(i+1)\in \mC(\mx^T(i))$ the child of 
$a$ that follows $\mx^T(i-1)$.
 
\medskip

This construction satisfies (\ref{claimx}). The sequence 
$\mx^T$ contains each node exactly two times and it starts and finishes at 
the root $a_0$.  From  (\ref{repseq2}), we can associate to $\mx^T$ an excursion 
$x^T$ having length $\theta(x)=2\ell$. This is the excursion of the contour process 
of a tree $T$, see  Section 3.3 in \cite{ch}. 

\medskip

Let $T$ and $T'$ be rooted trees with roots $a_0(T)$ and $a_0(T')$ respectively. 
They are said to be equivalent, we denote
it by $T\equiv T'$, if there exists a one-to-one mapping $\xi: \A(T)\to \A(T')$ such that 
$\xi(a_0(T))=a_0(T') \hbox{ and } \forall a,b\in \A(T):\;
\big[b\in \mC(a) \Leftrightarrow \xi(b)\in \mC(\xi(b))\big]$. 
This is equivalent to have a one-to-one mapping 
$\xi:\A(T)\to \A(T')$ that preserves the levels of $T$ and $T'$  and 
for all $a\in \A(T)$ the number of children of $a$ and $\xi(a)$  are the same.  

\medskip

For $x\in \X^+$ it is satisfied, 
\begin{equation}
\label{propx1}
T\equiv T(x) \Leftrightarrow  
\forall a\in \A(T) \; \exists \hbox{ a total order on } \mC(a) \hbox{ such that } x^T=x.
\end{equation}
It suffices to show $\Rightarrow$. The total order is defined recursively.  
We put $a_0(T)=\xi(I_0)$. Since $T\equiv T(x)$ there is a one-to-one 
assignment $\xi_{I_0}: \mC(I_0)\to \mC(a_0(T))$,   
such that $T(I)$ and $T(\xi_{I_0}(I))$ are equivalent. 
 We enumerate  $\mC(a_0(T))=\{\xi_{I_0}(I): I\in \mC(I_0)\}$ by the time of born of 
$I\in \mC(I_0)$. Now, the induction is made for all $I\in \mC(I_0)$ because
we can state a one-to-one assignment $\xi_{I}:\mC(I)\to \mC(\xi_{I_0}(I))$ such that
$T(J)$ and $T(\xi_{I}(J))$ are equivalent and we enumerate 
$\mC(\xi_{I_0}(I))=\{\xi_{I_0}(J): J\in \mC(I)\}$ by the time of born of 
$J\in \mC(I)$.  
Thus, we get $\mx^T=\mx_x$, so (\ref{repseq2}) gives $x^T=x$.   

\medskip

\begin{remark}
\label{Pitxx}
When one takes the collection of number of children of the 
individuals,  $\{|\mC|\}(x)=\{|\mC(I)|: I\in \I(x)\}$, then 
one can  compute the class of trees 
$T\in \{T\}\{|\mC|\}(x)$ that satisfy  $\{|\mC(a)|: a\in \A(T)\}=\{|\mC|\}(x)$, 
see \cite{Pit}. 
\end{remark}

\subsection{Level numbers of an excursion}

Let $\I_h(x)$ be the set of individuals born at level $h$ for $x\in \X$. 
For $x\in \X^+$, the sequence 
$$
N(x)=(N_h(x)=|\I_h(x)|: h\ge 0)
$$ 
of the number of individuals in $x$, ordered by the level at which they are born, 
is called the level numbers of $x$. We have $N_0(x)=1$ and
the total number of individuals $\oN(x)=\sum_{h\ge 0} N_h(x)$ 
satisfies $\oN(x)= |\I(x)|= \theta(x)/2$. 
The height $H(x)$ is the time of extinction of $N(x)$ because
$N_{H(x)}(x)=0$and $N_{H(x)-1}(x)>0$. So $H(x)$ is also called 
the height of $N(x)$ and denoted by $H(N(x))$.

\medskip
  
For a negative excursion $x$ one associates the  the sequence 
of level numbers $N(x)=(N_h(x)=|\I_h(x)|: h\le 0)$ 
in a similar way and the height $H(x)$ is also the time of extinction
of $N(x)$. 

\medskip

In the set $\I(x)=\{I_i: i=0,..,|\I|-1\}$ endowed with the level order 
(see (\ref{order})), the index $i$ of $I_i$ satisfies 
$i=(\sum_{k=0}^{h(I_{i-1})} N_k(x))+r_i-1$, 
where $r_i$ is the rank of time of birth of $I_i$ at level $\mh(I_i)$.

\medskip

\subsection{Law on excursions and level numbers}

Let us see that the level numbers of an excursion determines its 
$\PP$ probability measure. 

\begin{proposition}
\label{propG0}
For $x\in \X ^+$, the probability 
$\PP(X^\Theta=x | X^\Theta\in \X^+)$ only depends on $N(x)$. Or, equivalently, if 
$x, x'\in \X^+$ satisfy $N(x)=N(x')$, then 
$\PP(X^\Theta=x | X^\Theta\in \X^+ )=\PP(X^\Theta=x' | X^\Theta\in \X^+ )$. 

\smallskip

In a reciprocal way, assume the following hypothesis holds:  the set 
$\{\log(p_h q_{h+1}): h\ge 1\}$ satisfies
\begin{equation}
\label{eqarit}
\not\exists 
K\subset \NN,  K \hbox{ finite}, K\neq \emptyset: \, 
\sum_{h\in K} a_h \log (p_h q_{h+1})=0 \hbox { with } a_h\in
\ZZ\setminus \{0\}, h\in K. 
\end{equation}
Then,
if $\PP(X^\Theta=x | X^\Theta\in \X^+ )=\PP(X^\Theta=x' | X^\Theta\in \X^+ )$ 
we necessarily have
$N(x)=N(x')$. So, in this case the probability determines the level numbers.
\end{proposition}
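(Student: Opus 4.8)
The plan is to rewrite the unnormalized weight $\prod_{n=0}^{\theta(x)-1}p(x_n,x_{n+1})$ from (\ref{prob1G}) as a product over levels, collecting all transition factors produced at each fixed level $h$. The guiding observation, already implicit in Section \ref{sss0}, is that the jumps of $x$ split according to births and deaths of individuals: a jump $y_n=1$ with $x_n=h$ is the birth at level $h$ of an individual and contributes $p_h$, whereas a jump $y_n=-1$ with $x_n=h+1$ marks, at time $n+1$, the death of an individual born at level $h$ and contributes $q_{h+1}=p(h+1,h)$.

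First I would pin down the two exponent counts. Each up-jump leaving level $h$ is the birth of exactly one individual of $\I_h(x)$, and conversely; so the number of such jumps is $N_h(x)$. Dually, every individual born at level $h$ dies exactly once, at a time $\mt$ with $y_{\mt-1}=-1$ and $x_{\mt-1}=h+1$, which gives a bijection between $\I_h(x)$ and the down-jumps from $h+1$ to $h$; hence these too number $N_h(x)$. Grouping the factors accordingly gives the finite product
\[
\prod_{n=0}^{\theta(x)-1}p(x_n,x_{n+1})=\prod_{h\ge 0}p_h^{\,N_h(x)}\,q_{h+1}^{\,N_h(x)}=\prod_{h\ge 0}(p_hq_{h+1})^{N_h(x)},
\]
which is finite since $N_h(x)=0$ for $h\ge H(x)$. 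Substituting into (\ref{prob1G}) yields $\PP(X^\Theta=x\mid X^\Theta\in\X^+)=(p_0\beta_1)^{-1}\prod_{h\ge 0}(p_hq_{h+1})^{N_h(x)}$, which depends on $x$ only through $N(x)$; this settles the first assertion, and in particular $N(x)=N(x')$ forces equal probabilities.

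For the converse I would assume equal probabilities and cancel the common constant $(p_0\beta_1)^{-1}$. Taking logarithms of the resulting identity gives $\sum_{h\ge 0}(N_h(x)-N_h(x'))\log(p_hq_{h+1})=0$. Because $N_0(x)=N_0(x')=1$, the $h=0$ term vanishes, leaving $\sum_{h\ge 1}a_h\log(p_hq_{h+1})=0$ with $a_h:=N_h(x)-N_h(x')\in\ZZ$. Setting $K:=\{h\ge 1:a_h\neq 0\}$, a finite subset of $\NN$, a nonempty $K$ would be exactly a relation of the form forbidden by hypothesis (\ref{eqarit}); hence $K=\emptyset$, so $N_h(x)=N_h(x')$ for all $h\ge 1$, and with $N_0(x)=N_0(x')$ we conclude $N(x)=N(x')$.

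The only step requiring genuine care is the level-by-level bijection behind the exponent count: matching each down-jump with the death of the correct individual relies on the fact, recorded in Section \ref{sss0}, that an individual is determined by its death time just as well as by its birth time, so that births and deaths at a given level are separately counted by $N_h(x)$. Everything after that is bookkeeping together with a single invocation of the arithmetic-independence hypothesis (\ref{eqarit}), which moreover covers the degenerate case $|K|=1$ by forcing $\log(p_hq_{h+1})\neq 0$.
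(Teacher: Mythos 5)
Your proposal is correct and follows essentially the same route as the paper: it establishes the level-factorized form $(p_0\beta_1)^{-1}\prod_{h\ge 0}(p_hq_{h+1})^{N_h(x)}$ (the paper's (\ref{prob2G})) by matching births and deaths at each level, then invokes the arithmetic-independence hypothesis (\ref{eqarit}). The only difference is cosmetic: you take logarithms directly with signed integer coefficients $a_h=N_h(x)-N_h(x')$, whereas the paper first cancels common factors to obtain two products with positive exponents over disjoint index sets before taking logs — the same relation either way.
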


\begin{proof}
Since all the individuals born at level $h$ 
also die at this level, the probability of an excursion 
$x$ given by (\ref{prob1G}), is also expressed as
\begin{equation}
\label{prob2G}
\PP(X^\Theta=x | X^\Theta\in \X^+ )
=(p_0 \beta_1)^{-1} \!\! 
\prod_{h=0}^{H(x)-1} (p_h q_{h+1})^{N_h(x)},
\end{equation}
which gives the first part.

\medskip

For the reciprocal, let $x, x'\in \X^+$. 
If $\PP(X^\Theta=x | X^\Theta\in \X^+ )=
\PP(X^\Theta=x' | X^\Theta\in \X^+ )$, 
from formula (\ref{prob2G}) one gets, 
$$
\prod_{h=0}^{\theta(x)-1} (p_h q_{h+1})^{N_h(x)}=
\prod_{h=0}^{\theta(x')-1} (p_h q_{h+1})^{N_h(x')}.
$$
Now we erase from both sides the common terms of the 
type $p_l q_{l+1}$, in particular we erase $p_0 q_1$.
There will remain a term at the left or at the right hand side 
only if and only if
$N(x)\neq N(x')$. When this last case takes place we are left with an 
equality
$$
\prod_{h\in J}(p_h q_{h+1})^{b_h}=\prod_{h\in J'}(p_h q_{h+1})^{c_h}\,,
$$ 
with $b_h, c_h\in \NN$, $J$ and $J'$ disjoint finite subsets of $\NN$, 
and at least one of them being non nonempty. 
By taking $\log$ the hypothesis (\ref{eqarit})  is contradicted.
\end{proof}

\medskip

Since, only for a countable set of positive real numbers there 
exists an arithmetic integer condition (\ref{eqarit}), 
the generic case is that  the probability of the excursions 
are equal if and only if the level numbers are equal. 

\medskip

In the homogeneous case one has 
$\PP(X^\Theta=x | X^\Theta\in \X^+ )
=\PP(X^\Theta=x'| X^\Theta\in \X^+ )$
if and only if $\theta(x)=\theta(x')$, that is the equality of probabilities
only gives $\oN(x)=\oN(x')$.

\medskip

For negative excursions a similar statement as the one of Proposition \ref{propG0}
can be written. The unique difference is that for a negative excursion $x\in \X^-$ 
one has  $\PP(X^\Theta=x | X^\Theta\in \X^-)
=(q_0 \beta_{-1})^{-1} \prod_{h=H(x)+1}^{0} (q_h p_{h+1})^{N_h(x)}$.

\medskip

For the excursion random variable $X^\Theta$, the random element  
$N(X^\Theta)=(N_h(X^\Theta): h\ge 0)$ is its
associated level counting process.  
For an individual $I$ given by $(\mb, \mh, \mt)$, the probability that 
$k$ individuals are born in $[\mb+1,\mt-1]$ 
at level $\mh+1$, that is of having $|\mC(I)|=k-1$, is $p_{\mh}^{k-1}q_{\mh+1}$.

\medskip

In the homogeneous 
case  one has $p_h=p$, $q_h=q$, so the number of children 
of any individual is distributed as $G\sim \,$ Geometric$(q)$-1, so with 
$\PP(G=k)=p^{k}q$ for $k\ge 0$ and generating  function  
$\EE(z^G)=q/(1-p z)$.  So, $N(X^\Theta)$ 
is a linear fractional Galton Watson branching process of parameter $p$.
Many of its properties have been developed in Section I.4 of \cite{AN}. 
In \cite{Kleb}  it is studied Doob transforms on 
the Markov chain of the linear fractional branching process 
or in its generating function. 
In Section \ref{ssdoob} we study the Doob transform  
on the state dependent random walk to get the law  
of $X^\Theta$ and $N(X^\Theta)$ conditioned to $\Theta$ be finite.

\medskip

\subsection{Numbers of excursions with the same level numbers}

Let $N=(N_h: h\ge 0)$ be a (sequence of) level 
numbers. This means $N_h\in \ZZ_+$ for $h\ge 0$, 
$N(0)=1$, if $N_H=0$ then $N_{H+k}=0$ for all $k\ge 0$, 
and the total number of individuals $\oN=\sum_{h\ge 0}N_h$ is finite. 
We call $H(N)=\min\{h\ge 1: N_h=0\}$ the height or the 
time of extinction of $N$. Let $\theta=2\oN$. 
Consider the set of positive excursion with level numbers $N$, 
$$
\X^{+}(N)=\{x\in X^{+}: N(x)=N\}.
$$
Every $x\in \X^+(N)$ satisfies $\theta(x)=\theta$.
Let us check that  $\X^{+}(N)\neq \emptyset$. Let $H=H(N)$ 
and define the excursion $x$ given by the sequence of jumps 
$(y_n: n\in [0,\theta-1])$ with
$y_n=1$  if $n\in [0,H-1]$, and after  
$N(h)-1$ individuals are born at each level $h=H-1,H-2,...,1$, 
and they die immediately after their birth times. This is:
\begin{eqnarray*}
y_{H+2(N_{H-1}+...+N_{h+1})+2n}=-1 &{}& \hbox{ if } 
n\in [0,N_h-1],\\
y_{H+2(N_{H-1}+...+N_{h+1})+2n+1}=1 &{}& \hbox{ if } n\in [0,N_h-2].
\end{eqnarray*}
Then, there are born $N_h$ individuals at level $h=1...,H-1$, 
there is $1$ individual born at level $h=0$, and no individual 
is born at level $H$. So, $N(x)=N$.

\medskip

For making computations on the distributions of excursions, 
it is useful to compute the numbers of the excursions having 
the same level numbers, this is the cardinality of $\X^+(N)$, 
because combined with (\ref{prob2G}), it gives the 
probability of the class of excursions having the same level numbers.

\medskip

\begin{proposition}
\label{propcuenta}
Let $N=(N_h: h\ge 0)$ be a (sequence of) level 
numbers with $N(0)=1$, finite
total number of individuals $\oN$ and height $H(N)$. 
Then, the cardinality of $\X^{+}(N)=\{x\in X^{+}: N(x)=N\}$ is
$$
|\X^+(N)|=\prod_{h=1}^{H(N)-2}\binom{N_{h+1}+N_h-1}{N_h-1}\;.
$$ 
\end{proposition}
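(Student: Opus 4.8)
The plan is to exploit the bijection, established above, between positive excursions and rooted ordered (plane) trees, and to reduce the statement to a pure counting of plane trees with a prescribed level profile. Indeed, the correspondence $x\mapsto T(x)$ together with the reconstruction $T\mapsto x^T$ of (\ref{repseq2}) and (\ref{propx1}) identifies $\X^+$ with the set of rooted ordered trees, and by definition $N_h(x)=|\I_h(x)|$ is exactly the number of nodes of $T(x)$ at level $h$. Hence $|\X^+(N)|$ equals the number of rooted ordered trees having $N_h$ nodes at each level $h$, and it suffices to count those.

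The encoding I would use is the following. For each level $0\le h<H(N)$, order the $N_h$ individuals at that level by birth time as $I^h_1,\dots,I^h_{N_h}$, and record the offspring-count vector $c^{(h)}=(|\mC(I^h_1)|,\dots,|\mC(I^h_{N_h})|)$. Since every individual at level $h+1$ has a unique parent at level $h$, one has $\sum_{r=1}^{N_h}|\mC(I^h_r)|=N_{h+1}$, so $c^{(h)}$ is a composition of $N_{h+1}$ into $N_h$ nonnegative parts (zero parts being allowed, corresponding to individuals at level $h$ that are leaves). The decisive point is that, by the order-preservation and contiguity property (\ref{presor}), the individuals at level $h+1$ appear in birth order as consecutive blocks: the first $|\mC(I^h_1)|$ of them are the children of $I^h_1$, the next $|\mC(I^h_2)|$ the children of $I^h_2$, and so on. Thus $c^{(h)}$ determines the entire parent map from level $h+1$ to level $h$ and at the same time fixes the induced birth order at level $h+1$.

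I would then verify that $x\mapsto(c^{(h)})_{0\le h<H(N)}$ is a bijection from $\X^+(N)$ onto the product of the corresponding sets of compositions. Injectivity: the family $(c^{(h)})_h$ recovers the full parent relation on $\I(x)$, hence the ordered tree $T(x)$, hence $x$ itself via (\ref{repseq2}). Surjectivity: any prescribed family of compositions is realized by building the tree from the root downward, since once the $N_h$ nodes at level $h$ are ordered, the composition $c^{(h)}$ attaches $N_{h+1}$ new ordered children and thereby orders level $h+1$; so the choices are free and independent across levels. The standard count of compositions of $N_{h+1}$ into $N_h$ nonnegative parts gives $\binom{N_{h+1}+N_h-1}{N_h-1}$, and multiplying over levels yields $\prod_{h=0}^{H(N)-1}\binom{N_{h+1}+N_h-1}{N_h-1}$. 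The two extreme factors are trivial: for $h=0$ we have $N_0=1$, giving $\binom{N_1}{0}=1$, and for $h=H(N)-1$ we have $N_{H(N)}=0$, giving $\binom{N_{H(N)-1}-1}{N_{H(N)-1}-1}=1$; discarding them leaves precisely the stated product from $h=1$ to $H(N)-2$.

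The main obstacle is the middle step: showing that a plane tree with fixed level profile is encoded faithfully and freely by its level-wise offspring-count compositions. This is exactly where property (\ref{presor}) is indispensable, as it guarantees simultaneously that the children of same-level parents are contiguous in the correct order (so each transition is a single composition, with no residual ordering freedom) and that the encodings at distinct levels are independent (so the counts multiply). Once this is in place, the evaluation of the composition count and the collapse of the two boundary factors are routine.
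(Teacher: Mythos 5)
Your proposal is correct and follows essentially the same route as the paper's own proof: both encode an excursion in $\X^+(N)$ by its level-wise offspring-count compositions, an element of $\prod_{h}\D(N_{h+1},N_h)$, and use the contiguity/order-preservation property (\ref{presor}) to show this encoding is a bijection, with the composition count $\binom{N_{h+1}+N_h-1}{N_h-1}$ supplying each factor. The only (harmless) difference is that you include the boundary levels $h=0$ and $h=H(N)-1$ and observe their factors equal $1$, whereas the paper restricts the product to $h=1,\dots,H(N)-2$ from the outset.
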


\begin{proof}
Let $M,k\ge 1$. Let
$\D(M,k)=|(s_1,..,s_k)\in \ZZ_0^k: \sum_{j=1}^k s_i=M\}$ be the set of 
(additive) decompositions of $M$ into $k$ non-negative integers. Then 
$$
|\D(M,k)|=\binom{M+k-1}{k-1}.
$$
To check it, let $P(M\!+\!k\!-\!1,k\!-\!1)$ be the class of subsets of 
$\{1,...,M\!+\!k\!-\!1\}$ having $k\!-\!1$ elements.
The elements $\{l_1,..,l_{k-1}\}\in P(M\!+\!k\!-\!1,k\!-\!1)$ are written in 
an increasing form, $l_1< ...< l_{k-1}$. We put $l_0=0$, $l_{k}=M\!+\!k$.
Then, the mapping $P(M\!+\!k\!-\!1,k\!-\!1)\to \D(M,k)$, 
$\{l_1,..,l_{k-1}\}\to (s_1,...,s_k)$ with $s_j=l_{j}-l_{j-1}-1$, $j=1,...,k$, 
is a bijection.   

\medskip

Now take $\mD(N)=\prod_{h=1}^{H(N)-2} D(N_{h+1},N_h)$. This 
set has cardinality 
$$
|\mD(N)|=\prod_{h=1}^{H(N)-2}\binom{N_{h+1}+N_h-1}{N_h-1}. 
$$
To finish the proof let us show that there is a bijection 
$\eta: \mD(N)\to \X^+(N)$. Let 
$$
s=(s^h: h=1,...,H(N)-2)\in \mD(N) \hbox{ with }  
(s^h_1,..,s^h_{N_h})\in D(N_{h+1},N_h),
$$ 

First, we associate to $s$ a tree $T^s$, that has a root $a_0$ (level $0$). 
The root has $N_1$ children $\mC(a_0)=\{a^1_1,..,a^1_{N_1}\}$, ordered 
by the subindex (these are the elements at level $1$). Each
$a^1_{i_1}$ has $s^1_{i_1}$ children, for $i_1=1,..,N_1$. 
The children of $a^1_{i_1}$ 
are noted $\mC(a^1_{i_1})=\{a^2_{i_1,i_2}: i_2=1,...,s^1_{i_1}\}$, 
so $i_2$ indicates the 
rank of $a^2_{i_1,i_2}$ in $\mC(a^1_{i_1})$. 
Since $\sum_{i_1=1}^{N_i} s^1_{i_1}=N_2$, 
we can enumerate the $s^1_{i_1}$ children of $a^1_{i_1}$ as 
$$
\mC(a^1_{i_1})=\{a^2_i: i=\sum_{j_1=1}^{i_1-1}s^1_{j_1}+i_2, i_2=1,...,s^1_{i_1}\}.
$$
In this way there are $N_2$ nodes (elements of level 
$2$ of $T^s$) that are children of the $N_1$ children of $a_0$, that 
are enumerated by    
$\{a^2_i: i=1,...,N_2\}$  with  $i=i(i_1,i_2)=\sum_{j_1=1}^{i_1-1}s^1_{j_1}+i_2$ 
for some $i_2=1,..,s^1_{i_1}$. 

\medskip

By a recursive argument, the elements of the level $h\le H(N)-2$ of 
$T^s$ can be enumerated as $\{a^h_{i}: i=1,...,N_h\}$ where the rank $i$
depend on a $h-$ tuple of indexes $i=i(i_1, i_2,..,i_h)$ meaning that
$a^h_i\in \mC(a^{h-1}_{j})$ with $j=j(i_1,...,i_{h-1})$ and $i_h$ is the rank 
of $a^h_i$ in the set of children $\mC(a^{h-1}_{j})$.  
We have $N_{h+1}=\sum_{i=1}^{N_h}s^{h}_i$.
Then, $a^h_i$ has $s^{h}_i$ children that are noted 
 $\mC(a^h_{i})=\{a^{h+1}_{i_1,..,i_h,i_{h+1}}: i_{h+1}=1,...,s^{h}_{i}\}$.
As before we can order all the elements of level $h+1$ as 
$\{a^{h+1}_k: k=1,...,N_{h+1}\}$  with  
$k=k(i_1,...,i_h,i_{h+1})=\sum_{j_1=1}^{i_{h}-1}s^h_{j_1}+i_{h+1}$ 
for some $i_{h+1}=1,..,s^{h}_{i}$, being $i=i(i_1,...,i_h)$.  

\medskip

From (\ref{propx1}) we can associate an excursion $x$ to $T^s$
such that $T(x)=T^s$. By construction we have $x\in \X^+(N)$. 
On the other hand for two sequences $s, s'\in \mD(N)$, $s\neq s'$, the
construction made gives different rooted ordered  trees $T^s\neq T^{s'}$ and so 
different associated excursions $x$ and $x'$. 

\medskip

Now, the associated tree $T(x)$ of an excursion $x\in \X^+(N)$, defines 
an element $s\in  \mD(N)$. In fact, the individuals born at every level 
$h\in \{1,...,H(N)-2\}$ of $T(x)$  are totally ordered (by the time of birth), 
let us we enumerate them by $I^h_1,...,I^h_{N_h}$.  The sequence 
of numbers of their children is denoted  by $s^h=(s^{h}_1,...,s^{h}_{N_h})$,
which belongs to $D(N_{h+1},N_h)$ and so $s=(s^1,...,s^{H(N)-2})\in \mD(N)$. 
On the other hand, recall that if  $i<j$ then the rank of every child in 
$\mC(I^h_i)$ is smaller than the rank of every child in $\mC(I^h_j)$, 
at level $h+1$ (see(\ref{presor})), just as in the construction of the tree 
$T^s$.  Hence, we get $T^s=T(x)$.
\end{proof}

\medskip

\section{Transformations on excursions  that are level numbers preserving}

Simple changes on the jumps of an excursion $x\in \X^{+}$ can 
change the level numbers $N(x)$. For instance, 
let $a<\theta(x)$ with $x_a\ge 2$ and
assume $(y_a,y_{a+1})=(1,-1)$. 
Now make a unique change in the sequence of jumps,
instead of $(y_a,y_{a+1})$ put $(y'_a,y'_{a+1})=(-1,1)$. Then, the 
transformed excursion $x'$ satisfies
$N_h(x')=N_{h}(x)$ for $h\notin \{x_{a}-1, x_{a}\}$ and
$N_{a-1}(x')=N_{a-1}(x)+1$, $N_a(x')=N_a(x)-1$. Also
$\PP(X^\Theta=x | X^\Theta\in \X^+ )/\PP(X^\Theta=x' | X^\Theta\in \X^+ )
=(p_a q_{a+1})/(q_a p_{a-1})$.

\medskip

In the next subsections we consider transformations 
$$
\Psi:\X^+\to \X^+, x\to \Psi(x), \; x\in \X^+,
$$
that preserve the level numbers, so   
for all $x\in \X^+$ one has $N(x)=N(\Psi(x))$, that is
$|\I_h(\Psi(x))|=|\I_h(x)|$ for all $h\ge 0$.  
From Proposition \ref{propG0}, these transformations 
are measure-preserving, this means 
$$
\PP(X^\Theta=x | X^\Theta\in \X^+)
=\PP(X^\Theta=\Psi(x) | X^\Theta\in \X^+).
$$
(Similar relations hold for negative excursions). When the set $\I(x)$ is endowed  
with the level order (see (\ref{order})) and since $N(\Psi(x))=N(x)$,
then the set $\I(\Psi(x))$ is also ordered by it and we can write 
$\I(\psi(x))=\{I_{\varphi(i)}: i=0,...,|\I(x)|-1\}$ where 
$\varphi:\{0,...,|\I(x)|-1\}\to \{0,...,|\I(x)|-1\}$ is a bijection   
such that for all $h=1,...,H(x)-1$  the restriction $\varphi_{h}$ 
of $\varphi$ to the indexes of individuals in $\I_h(x)$, 
\begin{equation}
\label{order4}
\varphi_h:\{\sum_{k=0}^{h-1}N_k,\sum_{k=0}^{h}N_k-1\}\to 
\{\sum_{k=0}^{h-1}N_h,\sum_{k=0}^{h}N_k-1\},
\end{equation}
is also a bijection. 

\medskip

In the next subsections we will consider the reversed transformation 
and the shift of bridges, both are level numbers preserving 
transformations. 

\medskip

\subsection{Reversed excursion} 

Let $x\in \X$ be an excursion of length $\theta=\theta(x)$. Then, 
$\R(x)=(x_{\theta(x)-n}: n\in [0,\theta(x)])\in \X$ is a well defined  
excursion called the reversed excursion. From definition 
$\theta(\R(x))=\theta(x)$. The mapping $\R$ defines one-to-one 
mappings in $\X^{+}$ and in $\X^{-}$.

\medskip

We claim that this transformation  $\R$
preserves the associated level numbers, $N(\R(x))=N(x)$. 
It suffices to show it for $x\in \X^{+}$.  Let $x^{*}=\R(x)$.  For
all the notions defined on excursions, we add a superscript ${}^*$ 
when dealing with $x^*$, for instance its jumps and individuals 
are noted by $y^*$ and $I^*$, respectively. 
We have $y^*_n=x^*_{n+1}-x^*_n=
x_{\theta-n-1}-x_{\theta-n}=-y_{\theta-n-1}$.

\medskip

Let us consider an individual $I^*$ of $x^*$ born at time 
$\mb^*\ge 0$ at level $\mh^*\ge 1$ and dying at 
time $\mt^*=\inf\{n>\mb^*: X^*_{n}=h^*\}$. Then, 
$y^*_{\mb^*}=1=-y^*_{\theta-\mb^*-1}$, and so an 
individual $I$ of $x$ dies at time $\mt=\theta-\mb^*$. 
On the other hand $x^*_{\mb^*}=\mh^*$, so $x_{\theta-\mb^*}=\mh^*$,  
then $I$ is born at level $\mh^*$. Finally,
$$
\mt^*=\inf\{n>\mb^*: x^*_n=\mh^*\}=\sup\{\theta-n<\theta-b^*: 
x_{\theta-n}=\mh^*\}. 
$$
Hence, $I$ is born at time 
$\mb=\theta-\mt^*$. Since to each individual $I^*$ of $x^*$ one
is able to associate a unique individual $I$ of $x$ and 
this operation is also invertible, we conclude that the 
associated level numbers are equal, $N(x^*)=N(x)$. 
Note that the lengths of life of the individuals $I^*$ and $I$ 
are the same, in fact they are respectively  $\theta(I^*)=\mt^*-\mb^*$ and 
$\theta(I)=\mt-\mb=\theta-\mb^*-(\theta-\mt^*)$. 

\medskip

In the reversed excursion the time is also reversed, this is why 
the probability of $\R(x)$ is the same as for $x$,  
$\PP(X^\Theta=\R(x) | X^\Theta\in \X^+)
=\PP(X^\Theta=x | X^\Theta\in \X^+ )$.

\medskip
We have that $\R$ is an involution on $\X$, that is
$$
\R\circ \R=\hbox{Id}, \hbox{ the identity on } \X, \hbox{ so } \R^{-1}=\R.
$$
We can also define a random element $\R(X^\Theta)$ by,
$$
\R(X)_{n+1}-\R(X)_n=-Y_{\Theta-n-1}\hbox{  for } n\in [0,\Theta).
$$
By definition $\R(X^\Theta)$ is a random excursion of length $\Theta$ and 
since $\R^{-1}=\R$ one has
$$
\PP(\R(X^\Theta)=x| X^\Theta\in \X^+)
=\PP(X^\Theta=\R(x) | X^\Theta\in \X^+) \hbox{ for } 
x\in \X^+.
$$

Then, $N(\R(X^\Theta)|  X^\Theta\in \X^+ )$ is equally distributed as 
$N(X^\Theta|  X^\Theta\in \X^+)$. 
For the homogeneous random walk with $p\le q$, 
this is a linear fractional random walk of parameter $p$.

\medskip

\subsection{Shift of bridges} 

We will define the class of shift transformations,
$$
\mE=\{\E\{a,b,c;h\}: 1\le a\le b, c\not\in (a,b), h\ge 1\},
$$
where $\E\{a,b,c;h\}$ is partially defined on $\X^+$, its domain  
of definition being
$$
\D(\E\{a,b,c;h\})=\{x\in \X^{+}: 
 0\le a,b,c\le \theta(x), h\le H(x)-1, x_a=x_b=x_c=h \}.
$$ 
Let us describe the action of $\E=\E\{a,b,c;h\}$. Take $x\in \D(\E)$,  
$x[a,b]$ a bridge  with $x_a=x_b=h\ge 1$, and  $c\not\in  (a,b)$ with  
$x_c=h$. In the transformed excursion $\E(x)$, 
the bridge $x[a,b]$ will be shifted 
in $c-b$ units of times when $b\le c$ and in $a-c$ units of
time when $c\le a$.  It is also said that the bridge $x[a,b]$ is inserted 
in $c$. To be precise, for $x\in \X^+$ and $\theta=\theta(x)$, the shift is given by 
\begin{eqnarray}
\nonumber
\hbox{ If } b\le c: &{}& \E(x)[0,a)=x[0,a), \quad \E(x)[c,\theta]=x[c,\theta],\\
\label{defexch}
&{}& \E(x)[a,a+c-b]=x[b,c], \quad \E(x)[c-(b-a),c)=x[a,b) \hbox{ and} \\
\nonumber
\hbox{ If } c\le a: &{}& \E(x)[0,c]=x[0,c], \quad \E(x)(b,\theta]=x(b,\theta],\\
\nonumber
&{}& \E(x)(c,c+(b-a)]=x(a,b], \quad \E(x)[c+(b-a),b]=x[b,c].
\end{eqnarray}
In an equivalent way one can defines $\E(x)$ by
$\E(x)_n=x_{\me_\E(n)}$ with $\me_\E$ a transformation of the 
set of indexes. In the case $b\le c$ it is given by 
\begin{equation}
\label{chti}
\me_\E(n)=
\begin{cases}
n & \hbox{ if } n\in [0,a)\cup [c,\theta],\\
n-(a-b) & \hbox{ if } n\in [a,c-(b-a)]\\
n-(c-b) & \hbox{ if } n\in [c-(b-a),c).
\end{cases}
\end{equation}
Notice that $\me_\E$ is one-to-one and 
$\me_\E(c)\not\in (\me_\E(a),\me_\E(b))$. 
The case $c\le a$ is defined similarly. 

\medskip

From the definition we have $\theta(\E(x))=\theta(x)=\theta$ and 
$N_{h'}(x)=N_{h'}(\E(x))$ for all  $h'\ge 0$. Then
$$
N(\E(x))=N(x).
$$
Now, let $N=(N_h: h\ge 0)$ be (a sequence of) level numbers with $N(0)=1$ 
and finite $\oN$. Let $\theta=2\oN$.  Then,  we have
$$
\E: \X^{+}(N)\cap \D(\E)\to \X^{+}(N).
$$

The (partially defined) shift $\E$ is one-to-one. 
The shifts of $x[a,b]$ to $a$ or to $b$, 
are the identity.  When $b\le c$, $\E^{-1}$ is 
the shift of the bridge $x[c-(b-a),c]$ to $a$, and its domain is 
$\D(\E\{c-(b-a),c,a;h\})$. The equality $\E^{-1}(\E(x))=x$ holds because 
$\E(\D(\E\{a,b,c;h\}))\subseteq \D(\E\{c-(b-a),c,a;h\})$.
If $c\le a$ the inverse $\E^{-1}$ is 
the shift of the bridge $x[c,c+(b-a)]$ to $b$. 

\medskip

Let $\E=\E\{a,b,c;h\}$, $\E'=\E\{a',b',c';h'\}$ be two shifts. Then
the composition of shifts $\E'\circ \E$ is well-defined in the domain 
$\D(\E'\circ \E)$ constituted by the excursions $x\in \X^{+}$  such that 
$0\le a,b,c,a',b',c'\le \theta(x)$, $h,h'\le H(x)-1$ and 
$$
 x_a=x_b=x_c=h, \;\;
x_{\me_\E^{-1}(a')}=x_{\me_\E^{-1}(b')}=x_{\me_\E^{-1}(c')}=h'.
$$
Note that $(\E'\circ \E)^{-1}=\E^{-1}\circ {\E'}^{-1}$.
Therefore, when we consider the composition of transformations 
belonging to $\mE=\{\E\{a,b,c;h\}: 1\le a\le b, c\not\in (a,b), h\ge 1\}$, 
one gets a group of transformations where the domain of definition 
depends on the transformation.

\medskip

\subsection{Shift of bridges and level numbers preserving}
 
Let $x\in \X^{+}$ and $x[a,b]=x(I)$ be an excursion of an individual $I$  
at level $h\ge 1$, so with $x_a=h=x_b$, $x(a,b)>h$. 
The insertion of the excursion $x[a,b]$ in $c$ is simply called the 
insertion of $I$ in $c$. We denote by $\mE^e\subset \mE$ 
the class of shifts of positive excursions. This also acts as a group
of transformation because the inverse of a shift of an excursion is also a 
shift of an excursion. 

\medskip 

We take $\I(x)=\{I_i(x): i=0,..,|\I(x)|-1\}$ ordered by the level order, that is
first by the level and for equal level then 
by the time of birth, see (\ref{order}).
Let $x\in \X^+$ and $x(I_{i_0}(x))=[\mb_{i_0}(x),\mt_{i_0}(x)]$ be 
the excursion of the individual $I_{i_0}(x)$ born at level $\mh_{i_0}(x)$. 
Let $c$ be such that
$x_c=\mh_{i_0}(x)$ and $c\not\in (\mb_{i_0}(x),\mt_{i_0}(x))$. 
Once making the transformation $\E(x)$ that 
inserts $I_{i_0}(x)$ in $c$, the set $\I(\E(x))$ is written   
$\I(\E(x))=\{I_{\varphi(i)}(\E(x)): i=0,...,|\I(x)|-1\}$, where 
$\varphi:\{0,...,|\I(x)|-1\}\to \{0,...,|\I(x)|-1\}$ is a one-to-one mapping  
that preserves the level, $\mh(I_{\varphi(i)}(\E(x)))=\mh(I_i(x))$,
and it induces a one-to-one mapping 
$\I_{h}(x)\to \I_{h}(\E(x))$, see (\ref{order4}). 
The times of birth and death of the individuals in $\E(x)$ is computed 
with (\ref{defexch}) and (\ref{chti}), 
$$
\forall i: \; \mb_{\varphi(i)}(\E(x))=
\me_\E(\mb_i(x)),\; \mt_{\varphi(i)}(\E(x))=\me_\E(\mt_i(x)).
$$
Let us describe the case $c\ge \mt_{i_0}(x)$. The unique change in 
terms of the tree is that the parent (and so the ancestors) of $I_{i_0}(x)$
can change. One has
$$
\mP(I_{\varphi(i_0)}(\E(x)))
\begin{cases}
&=\mP(I_{i_0}(x)) \hbox{ if } x[\mb_{i_0}(x),c]\ge \mh_{i_0}(x) , \hbox{ and }\\
&\neq \mP(I_{i_0}(x)) \hbox{ if } \min[\mb_{i_0}(x),c]\le \mh_{i_0}(x)-1.
\end{cases}
$$  
In the last case one has,
\begin{eqnarray*}
&{}& \mP(I_{\varphi(i_0)}(\E(x))=I_k(x) \hbox{ with } 
\mh(I_k(x))=\mh_{i_0}(x)-1 \hbox{ and }\\ 
&{}& \mb(I_k(x))=\max\{\mb(I_j(x)): \mh(I_j(x))=\mh_{i_0}(x)-1,   
\mb(I_j(x))<\mb(I_{\varphi(i_0)}(\E(x))\}.
\end{eqnarray*}
In terms of trees:  if  $\mP(I_{i_0}(x))=\mP(I_{\varphi(i_0)}(\E(x)))$ then 
$T(x)$ and $T(\E(x))$ coincide, except by the numbering of $I_{i_0}$ in 
the set of children $\mC(\mP(I_{i_0}(x)))$. 
When $\mP(I_{i_0}(x))\neq \mP(I_{\varphi(i_0)}(\E(x)))$, 
the subtree $T(I_0(x))$, that 
hanged from $\mP(I_0(x))$ in $T(x)$, it hangs from  
$\mP(I_{\varphi(i_0)}(\E(x)))$ in $T(\E(x))$, that is once making the shift $\E$.

\medskip

\begin{remark}
\label{rete}
One can exchange the excursions of two individuals 
$I_i$ and $I_j$ at the same level in a way that keeps their ranks $i$ and $j$ 
respectively. This is done
by making a sequence of shifts of their children $\mC(I_i)$ and
$\mC(I_j)$ and so their rank $i$ and $j$ are kept, and after making  
these shifts their new children are $\mC(I_j)$ and $\mC(I_i)$ respectively. 
Hence, their excursions have been exchanged but their ranks are the same.
This property will be used in the proof of the next result.
\end{remark}

\medskip

In this result we will start  from some excursion $x$ and make
a sequence of shifts of excursions, so one has 
$x^r=\E_r\circ...\circ \E_1(x^0)$, $r\ge 1$. 
When writing this composition we always assume that $x$ is 
in the domain of definition of $\E_r\circ...\circ \E_1$. On the other hand 
we recall 
that for any sequence of level numbers $N=(N_h: h\ge 0)$ with $\oN$
finite, there exists some excursion $x$ with $N(x)=N$.

  \medskip

\begin{proposition}
\label{mainprop}
Let $x\in \X⁺$ be a positive excursion with level numbers $N(x)$. Then, 
$$
\{x' \in \X^+: N(x')=N(x)\}
=\{x^r=\E_r\circ...\circ \E_1(x): \E_1,..,\E_r\in \mE^{e},  r\ge 1\}.
$$ 
\end{proposition}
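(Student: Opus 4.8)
My plan is to prove the two inclusions separately. The inclusion $\supseteq$ is immediate from what precedes the statement: every $\E\in\mE^{e}$ satisfies $N(\E(x))=N(x)$, so any composition $\E_r\circ\cdots\circ\E_1$ preserves the level numbers and hence $N(x^r)=N(x)$. The real content is the inclusion $\subseteq$: if $x,x'\in\X^{+}$ have $N(x')=N(x)=N$, then $x'$ is reachable from $x$ by finitely many shifts of excursions. I would prove this by induction on the height $H=H(N)$, peeling off the top level. By Proposition \ref{propcuenta}, when $H\le 2$ the set $\X^{+}(N)$ is a singleton and there is nothing to do; this is the base case. For $H\ge 3$, set $N'=(N_0,\dots,N_{H-2},0,0,\dots)$, of height $H-1$, and define a projection $\pi\colon\X^{+}(N)\to\X^{+}(N')$ that deletes from an excursion every individual born at the top level $H-1$. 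Each such individual is a leaf, hence occupies a single up--down peak $[H-1,H,H-1]$, so $T(\pi(y))$ is exactly $T(y)$ restricted to the levels $\le H-2$.

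The heart of the argument, and the step I expect to be the main obstacle, is that $\pi$ is equivariant under shifts of excursions: for every $y\in\X^{+}(N)$ and every $\E\in\mE^{e}$ admissible on $\pi(y)$, there should be a lift $\widehat\E\in\mE^{e}$ admissible on $y$ with $\pi(\widehat\E(y))=\E(\pi(y))$. To establish this I would argue that $\E$ inserts the excursion $\pi(y)(I)$ of some individual $I$ at a level $h\le H-2$ into a position $c$ with $\pi(y)_c=h$, and that $c$ is necessarily a junction point separating contiguous level-$h$ subtrees. Since the level-$(H-1)$ leaves sit strictly above level $h$, deleting or adjoining them neither creates nor destroys such junctions, so $c$ corresponds to a unique position $\widehat c$ with $y_{\widehat c}=h$. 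Taking $\widehat\E$ to be the shift of $y(I)$ to $\widehat c$ then moves the whole subtree $T(I)$ carrying its top-level leaves along, and because the shift formulas (\ref{defexch}) and (\ref{chti}) act identically on the part of the trajectory lying below level $H-1$, projecting back recovers $\E(\pi(y))$. Verifying this bijection of insertion slots and the commutation of subtree transport with leaf deletion is the technical crux.

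Granting equivariance, I would finish in three steps. First, apply the induction hypothesis to $\pi(x),\pi(x')\in\X^{+}(N')$ to obtain $\E_1,\dots,\E_r\in\mE^{e}$ with $\E_r\circ\cdots\circ\E_1(\pi(x))=\pi(x')$. Second, lift them to $\widehat\E_1,\dots,\widehat\E_r\in\mE^{e}$, so that $\widetilde x:=\widehat\E_r\circ\cdots\circ\widehat\E_1(x)$ satisfies $\pi(\widetilde x)=\pi(x')$; consequently $\widetilde x$ and $x'$ carry identical tree structure on all levels $\le H-2$ and differ only in how the $N_{H-1}$ leaves are attached to the individuals at level $H-2$. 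Third, observe that all leaves at level $H-1$ are identical peaks, so their order under a common parent is irrelevant and only the number attached to each level-$(H-2)$ individual matters; moving these leaves one at a time by shifts of leaf-excursions, which leave every level $\le H-2$ untouched, equalises those numbers and yields $\widetilde x=x'$. Composing all the shifts used exhibits $x'$ as $\E'_s\circ\cdots\circ\E'_1(x)$ with each $\E'_i\in\mE^{e}$, giving $\subseteq$. I note that since $\mE^{e}$ is a group this also makes ``reachable by shifts of excursions'' an equivalence relation; the exchange device of Remark \ref{rete} could replace the leaf-redistribution of the third step, but the top-level peeling above already keeps every lower level fixed, so it is not strictly needed.
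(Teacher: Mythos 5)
Your proposal is correct, but it takes a genuinely different route from the paper. You induct top-down on the height $H(N)$, introducing the projection $\pi\colon\X^{+}(N)\to\X^{+}(N')$ that deletes the top-level leaves, and carrying the technical weight in an equivariance lemma asserting that shifts of excursions lift through $\pi$; your sketch of that lemma is sound, since every time point of $\pi(y)$ at a level $h\le H-2$ pulls back to a unique time point of $y$ at the same level (the deleted peaks occupy only levels $H-1$ and $H$, so no cut point of a shift can straddle one), and the shifted block carries its top-level peaks along, so deletion commutes with the block rearrangement (\ref{defexch}). Your finishing step is also right: all level-$(H-1)$ individuals are identical up-down peaks whose excursions are contiguous under a common parent, so only the count attached to each level-$(H-2)$ individual matters, and single-peak shifts (legitimate elements of $\mE^{e}$, inserting at the point $\mb(J)+1$ of the target parent $J$) equalize these counts without disturbing lower levels; the base case is indeed a singleton by Proposition \ref{propcuenta}. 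The paper instead works bottom-up over pairs of consecutive levels $(h,h+1)$ starting from $(1,2)$: using the exchange device of Remark \ref{rete} together with insertions of children's excursions, it forces the child counts $s^{h}_{i}$ of the level-ordered individuals of $x$ to match those of $x'$ at every level, producing $\hx$ with $T(\hx)\equiv T(x')$, and then applies a final round of shifts justified by (\ref{propx1}) to reorder children and reach $x'$ exactly. Your scheme buys a cleaner induction and an easy base case at the price of the projection/lifting lemma; the paper's scheme avoids any projection but pays with the bookkeeping of maximal matched prefixes and a final reordering pass, and in exchange is more directly algorithmic (cf.\ Remark \ref{contar}). Both ultimately rest on the same structural facts, contiguity of siblings' excursions and the order relation (\ref{presor}). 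One cosmetic point: for your inclusion $\supseteq$ to contain $x$ itself with $r\ge 1$, invoke the paper's observation that the shift of $x[a,b]$ to $a$ or to $b$ is the identity.
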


\begin{proof}
The statement is equivalent to the following one:  if  $x,x'\in \X^{+}$ 
are two positive excursions  with the same level numbers $N(x)=N(x')$,
 then there is a sequence of shifts of excursions $(\E_i: i=1,...,r)$ such that 
\begin{equation}
\label{seqch}
x'=\E_r\circ...\circ\E_1(x).
\end{equation}
Let us prove it. We have $N_h(x)=N_h(x')$ for all $h\ge 0$, so the heights of $x$ 
and $x'$ are equal, $H(x)=H(x')=H$. Then $N_{H+k}(x)=0=N_{H+k}(x')$ 
for all $k\ge 0$. Put $N_h=N_h(x)$ for $h=0,..,H-1$.  We will 
show (\ref{seqch}) by an induction procedure on pairs of consecutive levels. 

\medskip

Let us consider levels $1$ and $2$. There are $N_1$ individuals at level $1$ in 
both $x$ and $x'$. Let $I_1,..,I_{n_1}$ and $I'_1,..,I'_{N_1}$ be the individuals 
in $x$ and $x'$ respectively born at level $1$.  For $i=1,...,N_1$, let $s_{i}$ 
and $s'_{i}$ be respectively the number of 
children of $I_i$ and $I'_i$ in $x$ and $x'$, respectively.  We have, 
\begin{equation}
\label{eq1x}
\sum_{i=1}^{N_1}s_{i}=N_2=\sum_{i=1}^{N_1}s'_{i}.
\end{equation}
Let $k_1$ be the maximal integer $1\le k_1\le N_1$ for which there exists 
a permutation $\pi$ of $\{1,...,N_1\}$ that satisfies $s_{\pi(i)}=s'_i$ 
for all $i=1,..,k_1$.  As indicated in  Remark \ref{rete} we shift the excursions 
of the children of the individuals $I_i$ and $I_{\pi(i)}$ in order that after
these shifts,  the children of $I_i$ and $I_{\pi(i)}$ 
are $\mC(I_{\pi(i)})$ and $\mC(I_i)$, respectively. Hence, we have $s_i=s'_i$
for all $i=1,...,k_1$. 

\medskip

If $k_1=N_1$ we go to the next step. Assume $k_1<N_1$. If 
$s_{k_1+1}>s'_{k_1+1}$, then we insert $s_{k_1+1}-s'_{k_1+1}$ 
excursions of children of $I_{k_1}$ into the ending 
coordinate of the excursions of the children 
of some individual $I_r$ with $r>k_1+1$.  
If $s_{k_1+1}<s'_{k_1+1}$ then we take $s'_{k_1+1}-s_{k_1+1}$ 
excursions of the children of the individuals $(I_r: r>k_1+1)$ and insert them 
at the coordinate marking the end of the excursions of the children of 
$I_{k_1+1}$. These changes 
can be done because (\ref{eq1x}) holds. So, we get $s'_i=s_i$, $i=1,..,k_1+1$. 
Our algorithm continues  and so by simply shifting excursions of lervel $2$
in $x$, one gets $s_i=s'_i$ for $i=1,...,N_1$.
Then we go to the next step of the induction. 

\medskip
 
Suppose for some $h\ge 2$ and for all levels $1\le h'< h$ one has 
$s_{i,h',k}=s'_{i,h',k}$ for all $k=1,...,N_{h'}$, 
where  $s_{i,h',k}$ and $s'_{i,h',k}$ are the number of children of individuals 
$I_i$ and $I'_i$ in $x$ and $x'$ respectively, at level $h'<h$. 
Let us show that we can make a sequence of shifts at level 
$h+1$ in $x$, in order that this is also satisfied up to level $h+1$. 
Let $I_r$ and $I'_r$, $r=1,...,N_h$ be the individuals of 
$x$ and $x'$ at level $h$ and let $s_r$ and $s'_r$ be the number 
of their children. Since, similarly to (\ref{eq1x}), we have
$\sum_{r=1}^{N_h}s_r=N_{h+1}=\sum_{r=1}^{N_h}s'_r$, 
the same argument as the one made for individuals of levels $1$ and $2$  
works in this case for individuals of levels $h$ and $h+1$,
and shows that we can permute the children of the individuals 
of level $h$ in $x$, to get $s_r=s'_r$ for $i=1,...,N_h$. Hence, the
induction step holds.

\medskip

This induction states that there is sequence of shifts of excursions 
$(\E_i:  i=1,...,d)$ that once applied to $x$ gives:
$$
\hx=F(x) \hbox{ with }F=\E_d\circ...\circ\E_1  \hbox{ and  } 
\hx \hbox{ satisfies }T(\hx)\equiv T(x'),
$$
that is, the excursions  $\hx$ and $x'$  have equivalent trees. 
Then,  
|
by using property (\ref{propx1}), the class of children $(\mC(I): I\in \I(\hx))$ 
of $T(\hx)$ can be ordered inductively as in $(\mC(I): I\in \I(x'))$. 
Then, there exists a 
sequence of shifts of excursions $(\E_i:  i=d+1,...,d+d')$ fulfilling
$$
x'=F'(\hx) \hbox{ with } F'=\E_{d+d'}\circ...\circ \E_{d+1},
$$ 
because the change of enumeration corresponds in making  
shifts of excursions 
(that do not change the predecessors). We have proven
\begin{equation}
\label{esc1}
x'=F'\circ F(x)=\E_{d+d'}\circ...\circ \E_{d+1}\circ \E_d\circ...\circ\E_1(x). 
\end{equation}
 \end{proof}

\medskip

From (\ref{esc1}) we also find,
$x=F^{-1}\circ {F'}^{-1}(x')=\E_1^{-1}\circ...\circ \E_{d}^{-1}
\circ \E_{d+1}^{-1}...\circ \E_{d+d'}^{-1}(x')$,
where the ${\E_i}^{-1}$, $i=1,...,d+d'$ are also shifts of excursions.

\medskip

\begin{remark}
\label{contar}
Proposition \ref{mainprop} states that the action of 
shifts of excursions allows to retrieve the whole class 
of trees $\{T\}(N(x))$ that preserves $N(x)$ 
and when we know that two excursions $x$ and $x'$ have the same 
level numbers $N(x)=N(x')$ the proof provides an algorithm of 
transforming one excursions into the other one. 
\end{remark}

\medskip

{\bf Example}. Let $x\in \X⁺$, $x'=\E^*(x)$ where $\E^*$ is
the  shift of a negative excursion $x[a,b]$ to $c$.
So, in this case $c\not\in (a,b)$, $x_a=x_b=x_c=h\ge 1$ and 
$0<x(a,b)< h$. Then,  Proposition  \ref{mainprop} states that
one can write $x'=\E_r\circ...\circ \E_1(x)$ for a sequence 
of shifts of positive excursions $(\E_1,...,\E_r)$.

\medskip

\section{The Vervaat transform} 

We wish that the Vervaat transform of an excursion is also an excursion,
so we define it only on the set of excursions attaining their heights 
at a single coordinate. As for the other transformation we seek  
to see how it changes the level numbers. 

\medskip

The Vervaat transform, introduced in \cite{Ve}, has been mainly studied 
for the Brownian excursions and bridges. In \cite{lpt} it is studied 
for random walks, and
constructed  from the coordinate where the walk attains its global minimum.

\medskip

\subsection{Definition and properties}

Recall $H(x)$ denotes the height of an excursion $x$.  
Let us consider
$$
\X^{+,U}=\{x\in \X^+: N_{H(x)-1}=1\}, \; 
\X^{-,U}=\{x\in \X: N_{H(x)+1}=1\},\; X^U=\X^{+,U}\cup \X^{-,U}.
$$
$\X^{+,U}$ (respectively $\X^{-,U}$) is the set of positive (respectively negative) 
excursions for which the height is attained at a 
unique coordinate. In fact, for $x\in \X$  there 
exists a unique $m\in (0,\theta)$ such that $x_m=H(x)$ if and only if
$N_{H(x)-1}(x)=1$. The same for negative excursions.

\medskip

We will introduce the Vervaat transform  $\V:\X^U\to X^U$
and give some of its properties in detail.  
Let $X\in \X^{+,U}$ with length $\theta=\theta(x)$. 
It is useful to denote the transformed point by $\hx=\V^+(x)$ and to add to
all the notions associated to $\hx$ a hat $\;\, \widehat{}$, 
for instance its jumps are written 
$\hy_n$ and the individuals $\hI$. Let us make the definition,
\begin{equation}
\label{vv0}
\hx_n=
\begin{cases}
x_{n+m}-H(x) & \hbox{ if } 0\le n \le \theta-m\\
x_{n+m-\theta}-H(x) & \hbox{ if } \theta-m+1\le n \le \theta.
\end{cases}
 \end{equation}
Then, $\hx_0=0=\hx_{\theta}$. 

\medskip

\begin{proposition}
\label{vv1}
The Vervaat transform  $\V:\X^{U}\to \X^{U}$ is a bijection 
satisfying $\theta(\V(x))=\theta(x)$,  $H(\V(x))=-H(x)$ and
\begin{equation}
\label{vv5}
\V:\X^{+,U}\to \X^{-,U} \hbox{ and } \V:\X^{-,U}\to \X^{+,U}.
 \end{equation}
Moreover,
\begin{equation}
\label{vv3}
N_{-h}(\V(x))=
\begin{cases}
N_{H(x)-1-h}(x) & \hbox{ for } h=0,..,H(x)-1 \hbox{ when } x\in \X^+, \\
N_{H(x)+1-h}(x) & \hbox{ for } h=H(x)+1,...,0 \hbox{ when } x\in \X^-.
\end{cases}
\end{equation}
Finally, $\V$ is an involution in $\X^{U}$, that is
\begin{equation}
\label{vv4}
\V\circ \V ={\hbox{Id}}_{\X^{U}} \hbox{  the identity on } X^{U} \hbox{ and }  \V^-={\V}^{-1}.
\end{equation}
\end{proposition}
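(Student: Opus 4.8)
The plan is to recognize the Vervaat transform as a cyclic rotation of the increment sequence, which reduces every assertion to an elementary statement about closed walks. First I would compute the jumps of $\hx=\V(x)$ from (\ref{vv0}). For $x\in\X^{+,U}$ with $\theta=\theta(x)$, $H=H(x)$ and $m$ the unique coordinate with $x_m=H$, differencing the two cases of (\ref{vv0}) gives
$$
\hy_n=\hx_{n+1}-\hx_n=y_{(n+m)\bmod\theta},\qquad n\in[0,\theta),
$$
the boundary index $n=\theta-m$ producing $\hy_{\theta-m}=x_1-x_\theta=y_0$. Thus $\V$ rotates the increment word $(y_0,\dots,y_{\theta-1})$ cyclically by $m$ and then applies the unique vertical shift making the path start at $0$; the map $\V$ on $\X^{-,U}$ is the sign-symmetric analogue, conjugate to $\V^+$ by the reflection $x\mapsto-x$. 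Equivalently $\hx_n+H$ is the value sequence of $x$ read cyclically from index $m$. Since $x_k<H$ for every $k\neq m$ (uniqueness of the maximum), each interior value $\hx_n$ is strictly negative, while the two endpoints $x_0=x_\theta=0$ are glued into the single coordinate $n=\theta-m$ where $\hx$ attains its minimum $-H$. This simultaneously yields $\hx\in\X^{-,U}$ (establishing (\ref{vv5})), $\theta(\hx)=\theta$, $H(\hx)=-H$, and the fact that the argmin of $\hx$ is $\hm=\theta-m$, which I will reuse below.

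Next I would prove the level-number formula (\ref{vv3}) by counting edge crossings. Because $y_n=1$ precisely marks the birth of an individual at level $x_n$, the count $N_\ell(x)$ equals the number of up-steps of $x$ across the edge $(\ell,\ell+1)$; as $x$ is closed ($x_0=x_\theta$), this also equals the number of down-steps across the same edge. Reading the closed value sequence from a different base point does not change the crossing count of any fixed edge, so since $\hx+H$ is exactly $x$ read cyclically from $m$, a down-step of $\hx$ across the edge $(-h-1,-h)$ corresponds, after the shift by $H$, to a down-step of $x$ across $(H-h-1,H-h)$. For a negative excursion a birth at level $-h$ is exactly such a down-step, so combining these identities gives
$$
N_{-h}(\hx)=N_{H-1-h}(x),\qquad h=0,\dots,H-1,
$$
which is (\ref{vv3}); the case $x\in\X^-$ follows by the sign symmetry above.

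Finally I would establish the involution (\ref{vv4}) directly from the rotation picture. Applying $\V$ to $\hx$ rotates its increments by its own argmin $\hm=\theta-m$, so the twice-transformed increments are $\hy_{(n+\theta-m)\bmod\theta}=y_{(n+\theta)\bmod\theta}=y_n$. Since an excursion is determined by its starting value $0$ together with its increments, and $\V(\hx)$ again starts at $0$, this forces $\V(\V(x))=x$, i.e. $\V\circ\V=\mathrm{Id}_{\X^{U}}$; being self-inverse, $\V$ is a bijection with $\V^{-1}=\V$. I expect the main obstacle to be the bookkeeping of the first paragraph: carefully verifying across the piecewise wrap-around definition that $\hx$ stays strictly negative in the interior and identifying $\hm=\theta-m$. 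Once the increment-rotation reformulation and the cyclic invariance of edge crossings are in hand, the level-number computation and the involution are routine.
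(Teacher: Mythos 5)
Your proof is correct, and while it rests on the same underlying observation as the paper's proof --- that $\V$ acts on the increment word $(y_0,\dots,y_{\theta-1})$ as the cyclic rotation by $m$ followed by re-anchoring at $0$ --- you exploit that observation more systematically, and two of your sub-arguments genuinely differ from the paper's. For (\ref{vv3}) the paper builds an explicit individual-by-individual correspondence: a birth of $\hI$ in $\hx$ at time $\hmb$ at level $-h$ (marked by $\hy_{\hmb}=-1$) is matched to a death in $x$ at time $\hmb+m+1$ or $\hmb+m-\theta+1$, of an individual born at level $H(x)-h-1$. Your edge-crossing argument reaches the same count without tracking times: you identify $N_\ell(x)$ with the number of up-crossings of the edge $(\ell,\ell+1)$, use that up-crossings equal down-crossings for a closed walk, and invoke invariance of crossing counts under change of base point of the cyclic reading; this hides exactly the index bookkeeping the paper carries out by hand, at the small cost of having to articulate why births in a negative excursion are down-steps (which you do, and which matches the paper's convention). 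For (\ref{vv4}) the paper recomputes $\V(\V(x))$ piecewise from (\ref{vv0}), splitting into the two wrap-around regimes and checking $\wx_n=x_n$ case by case; you instead compose rotations --- by $m$ and then by $\hm=\theta-m$, totalling $\theta\equiv 0$ --- and conclude from the fact that a path is determined by its start value and increments. Your route is shorter and less error-prone, though it quietly relies on having verified that the argmin of $\hx$ is exactly $\theta-m$ and is unique, which you correctly flag as the real bookkeeping burden and establish in your first paragraph (via $x_k<H(x)$ for $k\neq m$). Your remark that $\V$ on $\X^{-,U}$ is the conjugate $-\V^{+}(-\,\cdot\,)$ of $\V^{+}$ by the sign reflection is a clean way to dispatch the negative case, which the paper handles by saying the same argument applies; both are legitimate.
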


\begin{proof}
From definition $\theta(\hx)=\theta$ and $H(\hx)=-H(x)$. 
Moreover,  $\hx_{\theta-m}=x_{\theta}-H(x)=-H(x)$ and one can check 
that $\hx$ attains its height $H(\hx)=-H(x)$ at the unique coordinate 
$\hm=\theta-m$, so $\hx\in \X^U$.  
From the definition $\hx(0,\theta)<0$ if $x\in \X^+$ and
$\hx(0,\theta)<0$ if $x\in \X^+$, so (\ref{vv5}) holds.

\medskip

Let us turn to the proof of (\ref{vv3}).
The jumps $\hy_n=\hx_{n+1}-\hx_n, \, 0\le n<\theta$,  satisfy  
$$
\hy_n=
\begin{cases}
y_{n+m} & \hbox{ if } 0\le n \le  \theta-m\\
y_{n+m-\theta} & \hbox{ if }  \theta-m+1\le  n < \theta.
\end{cases}
$$
Note that $\hy_{\theta-m}=\hx_{\theta-m+1}-\hx_{\theta-m}
=x_1-x_\theta=y_0=1$.

\medskip

Now take $x\in \X^{+,U}$ but the same argument can be done
for $x\in \X^{-,U}$).Let $h\ge 1$. An individual $\hI$ is born at time 
$\hmb\ge 0$ if at level $-h$ if $\hy_{\hmb}=-1$ and at level
$-h$ if $\hx_{\hmb}=-h$  
(notice that $\hmb\neq \theta-m$ because $\hy_{\theta-m}=1$).  So, 
$\hI$ is born at time $\hmb$ in $x$ if an individual $I$ in $x$ dies 
at time 
$$
\begin{cases} 
\hmb+m+1 & \hbox{ if } 0\le \hmb< \theta-m \\ 
\hmb+m-\theta+1 & \hbox{ if }  \theta-m+1 \le \hmb < \theta,
\end{cases}
$$
and it is born at level $H(x)+\hx_{\hmb+1}=H-h-1$ because $\hx_{\hmb+1}
=\hx_{\hmb}+\hy_{\hmb}=-h-1$.
Therefore, the level numbers $\hN(\hx)$ has $H(x)$ 
levels $(N_{-h}(\hx): 0\le h\le H(x)-1)$ as $N(x)$ and it satisfies 
(\ref{vv3}). In particular $\hN_0(\hx)=1=N_{H(x)-1}(x)$ (the last equality 
being the hypothesis). 

\medskip

Let us now show (\ref{vv4}). We also make the argument only for $x\in \X^{+,U}$. 
Let us put $\wx=\V(\V(x))$. Its height $H(\hx)=-H(x)$ is attained at the unique 
coordinate $\hm=\theta-m$. From (\ref{vv0}) we find, 
$$
\wx_n=
\begin{cases}
\hx_{n+m}+H(x) & \hbox{ if } 0\le n \le \theta-\hm\\
\hx_{n+\hm-\theta}+H(x) & \hbox{ if } \theta-\hm+1\le n \le \theta.
\end{cases}
$$
Then, by using $\theta-\hm=m$, 
$n+\hm=n+\theta-m$ and $n+\hm-\theta=n-m$, we get  
$$
\wx_n=
\begin{cases}
\hx_{n+\theta-m}+H(x) & \hbox{ if } 0\le n \le m\\
\hx_{n-m}+H(x) & \hbox{ if }  m+1\le n\le \theta.
\end{cases}
$$
By using again the definition (\ref{vv0}) we get that 
$\wx_{l+m-\theta}=x_{l-m-\theta}$ for $l=\theta-m+1,...,\theta$ 
and $\wx_{l+m}=x_{l+m}$ for $l=1,..,\theta-m$. This gives
$\wx_n=x_n$ for $n=1,..,m$ and $l=m+1,...,\theta$. 
Now, for $n=0$ we have
$\wx_0=\hx_{\theta-m}=-H(x)+H(x)=0=x_0$. 
Then $\wx=x$, that is $\V(\V(x))=x$. This implies 
that $\V$ is a bijection and that $\V^{-1}=\V$.
\end{proof}

\medskip

In the previous construction of $\V$ we have collapsed times $0$ and 
$\theta$,  and we have inserted an interval of times of length
$\theta$ just after the coordinate $m$ where the height is attained. 
So, $m$ plays the role of $0$, there are
$\theta$ units of times starting from $m$ because we identify 
$0$ and $\theta$. Let us see that at $m$ is the unique place where 
we can do this procedure. 
Take $x\in \X$ with $\theta(x)=\theta$,  fix $k\in [0,\theta]$ and 
define $x'$ by,
\begin{equation}
\label{transl}
x'_n=
\begin{cases}
x_{n+k}-x_k & \hbox{ if } 0\le n \le \theta-k\\
x_{n+k-\theta}-x_k & \hbox{ if } \theta-k+1\le n \le \theta.
\end{cases}
\end{equation}
Then $x'_{0}=0=x'_\theta$ holds. We develop the argument only for
$x\in \X^+$. We have that $x'$
is a positive excursion only when $k=0$ or $k=\theta$ and in both cases 
$x'=x$. Moreover, $x'$ is a negative excursion 
only if there exists a unique $m$ for which $x_m=H(x)$ and $k=m$.
In fact from (\ref{transl}) we must have $x_n\le x_k$ for all $n\in 
[0,\theta]$, so $x_k=H(x)$. If for some other $k'\in (0,\theta)$ one has 
$x_{k'}=H(x)$, then it would not be a negative excursion because in 
(\ref{transl}) one would get three different times $n$ at which one should 
have $x'_n=0$. Now, if there is a unique point, the unique negative 
excursion is defined with $k=m$ as in the previous construction. 

\medskip

We notice that for $x\in X^{U}$, 
$N(\V(x))$ only depends on $N(x)$. Moreover, if $N(x)=N(x')$
for  $x,x'\in \X$, then $\V$ is defined for none or for both $x,x'$, 
and when it is defined then $N(\V(x))=N(\V(x'))$. 

\medskip

The Vervaat transformed $\V(X^\Theta)$ excursion random variable,
when $X^\Theta\in \X^U$, is made with the set of jumps
$\hY_n=\V(X^\Theta)_{n+1}-\V(X^\Theta)_n, \, 0\le n<\Theta$ given by,   
$$
\hY_n=
\begin{cases}
Y_{n+m} & \hbox{ if } 0\le n \le  \Theta-m\\
Y_{n+m-\theta} & \hbox{ if }  \Theta-m+1\le  n < \Theta,
\end{cases}
$$
where $m$ is the coordinate at which $X^\Theta_m=H(X^\Theta)$.

\medskip

In the homogeneous case, with $p\le q$, 
$N(X^\Theta | X^\Theta\in \X^{+,U})$ is distributed as a 
linear fractional Galton Watson process of parameter $p$ 
conditioned to $N_{H(X^\Theta)=1}$.  We shall describe 
$N(\V(X^\Theta) | X^\Theta\in \X^{+,U})$ at the end of the last Section.

\medskip

Let us summarize the global action of some of the transformations already 
introduced.  We recall the sign reflection $-$ on $\X$, given by $x\to -x$. 
We have that $-$ and $\R$ preserve $\X^U$, that is $-\X^U=\R(\X^U)=\X^U$. 
Moreover $-$, $\R$ and $\V$ are involutions on $\X^U$, that is 
$$
{-}^2=\R^2=\V^2=\hbox{Id}_{\X^U},
$$ 
and they commute  among themselves:
$$
-\R=\R\circ {-}, {-}\V=\V\circ {-},
\R\circ\V=\V\circ \R.
$$
So, the group of transformations $\G(-,\R,\V)$ generated by these 
transformations acting on $\X^U$ 
(we denote by $\R$ the restriction of $\R$ to $\X^U$),  has eight elements 
$$
G(-,\R,\V)=\{\hbox{Id}_{\X^U},-\hbox{Id}_{\X^U},\R, -\R, \V,-\V, \V\circ \R,-\V\circ\R\}.
$$

\medskip

\subsection{A martingale}

We will compute $\PP(X^\Theta\in \X^U$, that is the probability 
of the set of excursions that attain its 
height at a unique point, which is the domain of the Vervaat
transformation. To this purpose it is useful to introduce a martingale.

\medskip

The random walk $X=(X_n: n\ge 0)$ starts  from $X_0=c$, for some 
fixed $c\in \ZZ$, but as said when the computations 
refers to the excursion random 
variable $X^\Theta$ or to $\Theta$, we always take $X_0=0$,  
or $X_0=1$ when dealing with positive excursions. 
To  define a martingale $Z=(Z_n: n\ge 0)$ associated to 
$X$ starting from $c$ we first introduce the sequence 
$(A_n: n\in \ZZ)$ given by $A_0=0$ and for $i\ge 1$,
$$
A_i=\sum_{j=1}^{i} \prod_{k=0}^{j-1}(q_{c+k}/p_{c+k}),\;\;
A_{-i}=-\! \!\sum_{j=-(i-1)}^{0} 
\prod_{k=j}^{-1} (p_{c+k}/q_{c+k})\;.
$$
Note that $A_{-1}=-1$.  The sequence $(A_i: i\in \ZZ)$ is similar to 
$(\alpha_i: i\in \ZZ_+)$ introduced in (\ref{eqG1}), but shifted in 
the initial coordinate, and it corresponds to the set of coordinates
used to immerse the random walk in the Brownian motion as done 
in \cite{Harris}. 
The process $Z$, which is proven to be a martingale, is defined by 
$$
Z_{n}=A_{X_n-c}, \; n\ge 0, \hbox{ and so } Z_0=0.
$$
Since the sequence $(A_n: n\in \ZZ)$ is strictly increasing,
$j\to A_{j-c}$ for $j\in \ZZ$, is one-to-one. On the other hand 
when $X_n=k$ then $Z_n=A_{k-c}$. 
So, the information $\{Z_i: 0\le i\le n\}$ 
is equivalent to $\{X_i: 0\le i\le n\}$ and so it also gives
$\{p_{X_i}, q_{X_i}: 0\le i\le n\}$. We have
$$
\EE(Z_{n+1} | Z_{i}, i\le n)=p_{X_n}A_{X_n-c+1}+q_{X_n}A_{X_n-c-1}.
$$
It is straightforward to see that $(Z_n: n\ge 0)$ is a martingale.
Let us check it when $Z_n=A_{j}$ for some $j\ge 1$. We have 
$X_n=c+j$, so
\begin{eqnarray*}        
&{}&\EE(Z_{n+1} | Z_{i}, i\le n)=p_{c+j} A_{j+1}+q_{c+j} A_{j-1}\\
&{}& 
=p_{c+j}\left(A_j+\left(\prod_{k=0}^{j-1}\frac{q_{c+k}}{p_{c+k}}\right)
\frac{q_{c+j}}{p_{c+j}}\right)
+q_{c+j}\left(A_j-\left(\prod_{k=0}^{j-1}
\frac{q_{c+k}}{p_{c+k}}\right)\right)=A_j=Z_n.
\end{eqnarray*}


Define the stopping time 
$\tau_l=\inf\{n\ge 0: X_n=l\}$. For $r<c<s$, consider the stopping time
$\tau=\inf(\tau_r,\tau_s)$. By using the martingale property of 
$Z_{n}=A_{X_n-c}$ we get,
$$
\PP(\tau_s<\tau_r)A_{s-c}+(1-\PP(\tau_s<\tau_r)A_{r-c}=0.
$$
Then,
\begin{equation}
\label{eqbas}
\PP(\tau_s<\tau_r)=\frac{-A_{r-c}}{A_{s-c}-A_{r-c}} \hbox{ and }
\PP(\tau_r<\tau_s)=\frac{A_{s-c}}{A_{s-c}-A_{r-c}}\,.
\end{equation}
To compute this quantities let us consider $u,v\ge 1$. From 
the definition of $A_u, A_v$ and after multiplying and dividing by
$\prod_{k=-(u-1)}^{-1} \frac{q_{c+k}}{p_{c+k}}$ one gets
\begin{eqnarray*}
&{}&\frac{-A_{-u}}{A_v-A_{-u}}=\frac{\sum_{j=-(u-1)}^{0} 
\prod_{k=-(u-1)}^{j-1}\frac{q_{c+k}}{p_{c+k}}}
{\sum_{j=-(u-1)}^{v} \prod_{k=-(u-1)}^{j-1}\frac{q_{c+k}}{p_{c+k}}}\,,\\
&{}&\frac{A_{v}}{A_v-A_{-u}}=\frac{\sum_{j=1}^{v}
\prod_{k=-(u-1)}^{j-1}\frac{q_{c+k}}{p_{c+k}}}
{\sum_{j=-(u-1)}^{v} \prod_{k=-(u-1)}^{j-1}\frac{q_{c+k}}{p_{c+k}}}\,.
\end{eqnarray*}
Hence, by taking into account that $s-c\ge 1, -(r-c)\ge 1$, we replace the
quantities into (\ref{eqbas}) to get,
\begin{eqnarray}
\nonumber
&{}&\PP(\tau_s<\tau_r)=\frac{-A_{r-c}}{A_{s-c}-A_{r-c}}
=\frac{\sum_{j=r-c+1}^{0}\prod_{k=r-c+1}^{j-1}\frac{q_{c+k}}{p_{c+k}}}
{\sum_{j=r-c+1)}^{s-c} \prod_{k=r-c+1)}^{j-1}\frac{q_{c+k}}{p_{c+k}}}\,,\\
\label{expmart2}
&{}&\PP(\tau_r<\tau_s)
=\frac{A_{s-c}}{A_{s-c}-A_{r-c}}=\frac{\sum_{j=1}^{s-c}
\prod_{k=r-c+1}^{j-1}\frac{q_{c+k}}{p_{c+k}}}   
{\sum_{j=r-c+1}^{s-c} \prod_{k=r-c+1)}^{j-1}\frac{q_{c+k}}{p_{c+k}}}\,.
\end{eqnarray}

\medskip

\subsection{Distribution of the height}
\label{sshd}

Our first result gives the distribution of 
the height $H(X^\Theta)$ of a positive random excursion, 
so with $X_0=0, X_1=1$. (For negative excursions similar computations 
can be made). Firstly, we will get
$$
\PP_1(H(X^\Theta)\ge s, \Theta<\infty) \hbox{ for } s\ge 1.
$$
Recall that (\ref{eqG2}) states that  
$\beta_s=\PP(\Theta < \infty \, | \, X_k=s, \Theta>k)$ (with $k-s\in 2\ZZ_+$), 
satisfies $\beta_s=1$ 
when $\alpha_\infty=1$ and 
$\beta_s=1-\frac{\alpha_{s}}{\alpha_{\infty}}$ if 
$\alpha_\infty<\infty$.
Notice that, 
$\PP_1(\Theta<\infty, H(X^\Theta)\ge 1)=\PP_1(\Theta < \infty)=\beta_1$. 
We will also  compute the probability that at some  
unique coordinate the height of the excursion is attained, this is
$\PP_1(\Theta<\infty, X^\Theta\in \X^{+,U})$.
Notice that
$$
\PP_1(H(X^\Theta)=1, X^\Theta\in \X^{+,U})=q_1, 
$$
and this event corresponds to the excursion $(0,1,0)$. 

\medskip

\begin{proposition}
\label{propxx}
We have,
\begin{equation}
\label{height1}
\PP_1(\Theta<\infty, H(X^\Theta)\ge s)=\frac{\beta_s} {\sum_{j=0}^{s-1} 
\prod_{k=0}^{j-1}\frac{q_{k+1}}{p_{k+1}}},
\end{equation}
and
\begin{eqnarray}
\nonumber
&{}&\PP_1(\Theta<\infty, X^\Theta\in \X^{+,U}, H(X^\Theta)=s)\\
\label{height2}
&{}&= \frac{q_s}
{\sum_{j=0}^{s-1} \prod_{k=0}^{j-1}\frac{q_{k+1}}{p_{k+1}}}  
\times
\frac{\prod_{k=-s+2}^{0}\frac{q_{k}+s-1}{p_{k+s-1}}}
{\sum_{j=-s+2}^{1} \prod_{k=-s+2)}^{j-1}\frac{q_{c+k}}{p_{c+k}}}.
\end{eqnarray}
\end{proposition}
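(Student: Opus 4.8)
The plan is to reduce both identities to the gambler's ruin probabilities recorded in (\ref{eqbas})--(\ref{expmart2}), by treating a positive excursion as the walk started from level $1$ and run until its first visit to $0$, and applying the strong Markov property at the first passage to the level $s$.

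For (\ref{height1}) I would first note that, inside an excursion, the event $\{H(X^\Theta)\ge s\}$ coincides with $\{\tau_s<\tau_0\}$: on $\{\Theta<\infty\}$ one has $H(X^\Theta)=\max_{0\le n\le\Theta}X_n$, and since $\Theta=\tau_0$ under $\PP_1$, the maximum reaches $s$ exactly when $s$ is hit before $0$ (if instead $\tau_0<\tau_s$ the excursion closes below $s$). Conditioning on the first visit to $s$ and using the strong Markov property at $\tau_s$, together with the reading of $\beta_s$ from (\ref{eqG2}) as $\PP_s(\tau_0<\infty)$, the probability of eventually returning to $0$ from level $s$, I obtain
$$
\PP_1(\Theta<\infty,\,H(X^\Theta)\ge s)=\PP_1(\tau_s<\tau_0)\,\beta_s.
$$
It then remains to evaluate $\PP_1(\tau_s<\tau_0)$ from (\ref{eqbas}) with starting point $c=1$, lower barrier $r=0$ and upper barrier $s$. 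Because $A_{-1}=-1$ (that is, $r-c=-1$), the numerator $-A_{r-c}$ collapses to $1$, while $A_{s-1}+1$ absorbs the empty $j=0$ product into $\sum_{j=0}^{s-1}\prod_{k=0}^{j-1}(q_{k+1}/p_{k+1})$, which is precisely the denominator in (\ref{height1}).

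For (\ref{height2}) the decisive step is a structural description of $\X^{+,U}\cap\{H=s\}$. The walk hits $s$ for the first time at $\tau_s<\tau_0$; the height equals $s$ and is attained at a unique coordinate if and only if the step out of $s$ at time $\tau_s$ goes down to $s-1$ (it cannot go to $s+1$ without raising the height), and the walk thereafter reaches $0$ without ever returning to $s$. Since the walk is nearest-neighbour, avoiding $s$ on the way to $0$ automatically keeps it below $s$ and forces $\tau_s$ to be the only visit to $s$, so both $H=s$, uniqueness, and finiteness $\Theta<\infty$ follow. Splitting at $\tau_s$ and at the ensuing down-step and applying the strong Markov property twice gives the three independent factors
$$
\PP_1(\Theta<\infty,\,X^\Theta\in\X^{+,U},\,H(X^\Theta)=s)=\PP_1(\tau_s<\tau_0)\cdot q_s\cdot \PP_{s-1}(\tau_0<\tau_s),
$$
where the first factor, multiplied by $q_s$, is already the first fraction in (\ref{height2}).

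The remaining factor $\PP_{s-1}(\tau_0<\tau_s)$ I would read off from (\ref{expmart2}) with starting point $c=s-1$, lower barrier $0$ and upper barrier $s$, so that $s-c=1$ and $r-c+1=-s+2$; the numerator reduces to its single $j=1$ term $\prod_{k=-s+2}^{0}(q_{k+s-1}/p_{k+s-1})$ and the denominator to the stated sum, matching the second fraction in (\ref{height2}). The main obstacle is not analytic but is this bookkeeping: justifying rigorously the ``unique maximum'' equivalence (a single down-step out of $s$ followed by avoidance of $s$ is exactly $x\in\X^{+,U}$ with $H=s$), and then aligning the shifted index ranges of (\ref{expmart2}) so the two ruin expressions land on the indicated products. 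As a consistency check, taking $s=1$ reduces both fractions to empty products and returns $q_1$, the probability of the excursion $(0,1,0)$ noted before the statement.
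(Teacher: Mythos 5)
Your proposal is correct and follows essentially the same route as the paper: the same strong Markov decompositions $\PP_1(\tau_s<\tau_0)\,\beta_s$ for (\ref{height1}) and $\PP_1(\tau_s<\tau_0)\,q_s\,\PP_{s-1}(\tau_0<\tau_s)$ for (\ref{height2}), evaluated via the martingale ruin formulas (\ref{eqbas})--(\ref{expmart2}) with the same choices of barriers, and the same treatment of $s=1$ (which the paper checks at the outset rather than as a final consistency check). Your explicit justification of the ``unique maximum'' equivalence and of the index bookkeeping is slightly more detailed than the paper's, but it is the identical argument.
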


\begin{proof}
One can check that the relations 
(\ref{height1}) and (\ref{height2}) hold for $s=1$. 
Let us prove the relations for $s\ge 2$. We have
$$
\PP_1(\Theta <\infty, H(X^\Theta)\ge s)= 
\PP_1(\tau_s<\tau _0)\PP_s(\Theta <\infty).
$$
Hence by putting $c=1$, $r=0$ one finds
\begin{equation*}
\PP_1(\Theta <\infty, H(X^\Theta)\ge s)
=\frac{\beta_s}
{\sum_{j=0}^{s-1} \prod_{k=0}^{j-1}\frac{q_{k+1}}{p_{k+1}}},
\end{equation*}

On the other hand,
$$
\PP_1(\Theta <\infty, H(X^\Theta)=s, N_{s-1}=1=s)
=\PP_1(\tau_s<\tau_0)\, q_s \, \PP_{s-1}(\tau_0<\tau_s).
$$
By making again use of 
(\ref{expmart2}) one gets
\begin{eqnarray*}
&{}&
\PP_1(\Theta <\infty, H(X^\Theta)=s, N_{s-1}=1)\\
&{}&= \frac{q_s}
{\sum_{j=0}^{s-1} \prod_{k=0}^{j-1}\frac{q_{k+1}}{p_{k+1}}}  \times
\frac{\prod_{k=-s+2}^{0}\frac{q_{k}+s-1}{p_{k+s-1}}}
{\sum_{j=-s+2}^{1} \prod_{k=-s+2}^{j-1}\frac{q_{c+k}}{p_{c+k}}}.
\end{eqnarray*}
Since $\{H(X^\Theta)=s, N_{s-1}=1\}=\{H(X^\Theta)=s,X^\Theta\in \X^{+,U}\}$, 
the result follows.
\end{proof}

\medskip

In the homogeneous case, Proposition \ref{propxx} gives:

\medskip

(i) When $p\neq q$:
\begin{eqnarray*}
&{}& \PP_1(\Theta <\infty, H(X^\Theta)\ge s)
=\beta_s p^{s-1} \left(\frac{q-p}{q^s-p^s}\right)
\hbox{ and }\\
&{}& \PP_1(\Theta<\infty, H(X^\Theta)=s, N_{s-1}=1)
=q(pq)^{s-1}\left(\frac{q-p}{q^s-p^s}\right)^2.
\end{eqnarray*}  

In the case $p\le q$ one has $\beta_s=1$
and if $p>q$ then $\beta_s=(p/q)^s$, see (\ref{eqG3}).

\medskip

(ii) When $p=q$ we have $\PP(\Theta<\infty | X_0=1)=1$ and we get: 
\begin{eqnarray*}
&{}& \PP_1(\Theta<\infty, H(X^\Theta)\ge s)=\frac{1}{s},\\
\nonumber
&{}& \PP_1(\Theta<\infty, H(X^\Theta)=s, X^\Theta\in \X^{+,U})
=\frac{1}{2s^2} \hbox{ and }\\
&{}& \PP_1(\Theta<\infty, X^\Theta\in \X^{+,U})
=\frac{\pi^2}{12}.
\end{eqnarray*}

\medskip

For negative excursions we set $X_1=-1$ and the same computations 
give the analogous results for the $H(X^\Theta)$. 
In the case $p\neq q$ one 
must exchange $p$ by $q$ in the formulae  
and note that when $q\le p$ one has $\PP_{-1}(\Theta<\infty)=1$. 

\medskip

\section{Doob transform and conditioning on always return to $0$}  
\label{ssdoob}

Now we study the probability behavior of trajectories that return to $0$, 
so for which $X^\Theta$ is well defined. 
All we will do in the next paragraphs has a meaning when 
$\PP_0(\Theta<\infty)<1$. Since
$\PP_0(\Theta<\infty)=
p_0 \PP_1(\Theta<\infty)+q_0\PP_{-1}(\Theta<\infty)$,
we will be in the case 
$\PP_1(\Theta<\infty)<1$ or  $\PP_{-1}(\Theta<\infty)<1$.
When this happens we will define the jump probabilities ensuring 
to have a.s. return to the origin, and this will give as a byproduct the
statistics of $X^\Theta$ conditioned to $\Theta<\infty$. 

\medskip

The first return to $0$ is called $\tau^+=\inf\{n>0: X_n=0\}$ (this
variable was denoted $\Theta$ when starting from $X_0=0$). 
We also consider the hitting time 
of $0$, $\tau(\om)=\inf\{n\ge 0: X_n=0\}$. So,
if $X_ 0\neq 0$ one has $\tau^+=\tau$. 

\medskip

Now, we define a canonical random walk 
that always return to $0$. First, from the Markov property we have for all 
$i\in \ZZ$,
\begin{equation}
\label{QP1}
\PP_i(\tau^+<\infty)
=p_i \PP_{i+1}(\tau<\infty)+ q_i \PP_{i-1}(\tau<\infty).
\end{equation}
In particular and since $\PP_0(\tau_0<\infty)=1$,
$$
\PP_1(\tau^+<\infty)
=q_1+p_1 \PP_2(\tau<\infty),\quad
\PP_{-1}(\tau^{+}<\infty)=p_{-1}+q_{-1} \PP_{-2}(\tau<\infty).
$$
Now, let $i_0=0$ and $i_1,...,i_n$ be all positive or all negative, 
and such that $|i_{k+1}-i_k|=1$ for $k=0,...,n-1$. From the Markov 
property one has,  
\begin{eqnarray*}
\PP_0(X_k=i_k, k=1,...,n, \tau^+<\infty)&=&
\PP_{i_n}(\tau^+<\infty)\PP_{0}(X_k=i_k, k=1,...,n)\\
&=&\PP_{i_n}(\tau^+<\infty)\prod_{k=0}^{n-1}p(i_k,i_{k+1}).
\end{eqnarray*}
So,
\begin{eqnarray}
\nonumber
\PP_0(X_k=i_k, k=1,...,n, \tau^+<\infty | \tau^+<\infty)
&=&\frac{\PP_{i_n}(\tau<\infty)}{\PP_0(\tau⁺<\infty)} 
\left(\prod_{k=0}^{n-1}p(i_k,i_{k+1})\right)\\
\label{expprod}
&=&\prod_{k=0}^{n-1}p(i_k,i_{k+1})\frac{\PP_{i_{k+1}}(\tau<\infty)}
{\PP_{i_k}(\tau^+<\infty)}.
\end{eqnarray}
Let us define,
$$
\tp_i=p_i \frac{\PP_{i+1}(\tau<\infty)}{\PP_i(\tau^+<\infty)}, \quad
\tq_i=q_i \frac{\PP_{i-1}(\tau<\infty)}{\PP_i(\tau^+<\infty)}.
$$
From (\ref{QP1}) we get $\tp_i+\tq_i=1, \; i\in \ZZ$.

\medskip

Let us endow the random walk $X=(X_n : n\ge 0)$ 
with the transition probabilities $\tp_i\in (0,1)$ and $\tq_i=1-\tp_i$
for the passages from $i$ to $i+1$ and from $i$ to $i-1$ respectively,
for $i\in \ZZ$. The transition probability matrix given by $\tp_i$, $\tq_i$, 
is called the $Q-$matrix 
(of always return to $0$). Let $\tPP_k$ be the probability law 
when using $\tp_i$ and $\tq_i$ for $i\in \ZZ$ 
and when the walk starts from $X_0=k$. Let us write  $\tp_i, \tq_i$, 
in terms of known quantities.

\medskip

The hitting probabilities for excursions are the
$(\beta_i : i\in \ZZ\setminus \{0\})$ defined 
in (\ref{eqG2}) and (\ref{eqG2x}).  We have 
$\PP_i(\tau_0^{+}<\infty)=\PP_i(\tau_0<\infty)=\beta_i$ for $i\in
\ZZ\setminus \{0\}$  and we define
\begin{equation}
\label{beta0}
\beta_0=\PP_0(\tau_0^{+}<\infty)=p_0 \beta_1+q_0 \beta_{-1}.
\end{equation}
Hence the transition probabilities of the $Q-$matrix are given by
\begin{eqnarray}
\label{wprobtr}
&{}& \tp_i=p_i \frac{\beta_{i+1}}{\beta_i},\; \tq_i=q_i 
\frac{\beta_{i-1}}{\beta_i} \hbox{ for } |i|\neq 1,\\
\nonumber
&{}& \tp_1=p_1 \frac{\beta_{2}}{\beta_1},\; \tq_1=q_1
\frac{1}{\beta_1} \hbox{ and }
\tp_{-1}=p_{-1} \frac{1}{\beta_{-1}},\; \tq_{-1}=q_{-1}
\frac{\beta_{-2}}{\beta_{-1}}.
\end{eqnarray}

Let $x=(i_0,...,i_n)\in \X$ be an excursion, so with $i_0=0=i_n$,  
$i_1,...,i_n$ all positive or all negative,
and such that $|i_{k+1}-i_k|=1$ for $k=0,...,n-1$. 
From (\ref{expprod}) and since $\PP_{i_n}(\tau<\infty)=
\PP_0(\tau<\infty)=1$, we find,
\begin{eqnarray}
\nonumber
\tPP_0(X_1=i_1,...,X_n=i_n)&=&\left(\prod_{i=0}^{n-1} p(i_{k-1},i_k)\right)
{\PP_0(\tau^+<\infty)}^{-1}\\
\label{doobx}
&=& \PP_0(X_1=i_1,...,X_n=i_n){\PP_0(\tau^+<\infty)}^{-1}.
\end{eqnarray}

By summing over all the excursions  $x=(i_0,...,i_n)\in \X$ in (\ref{doobx}), we get   
$$
\tPP_0(\Theta<\infty)=\PP_0(\tau^+<\infty){\PP_0(\tau^+<\infty)}^{-1}=1.
$$
Let us note by $\tEE$ the mean expected value associated 
to $\tPP$. From (\ref{doobx}) we also get that for every nonnegative function 
$g:\NN\to \RR_+$ it holds
$$
\tEE_0(g(\Theta))
=\sum_{n\ge 1}g(n) \PP_0(\tau^+=n){\PP_0(\tau^+<\infty)}^{-1}
=\EE_0(g(\Theta) \, | \,  \Theta<\infty).
$$

\medskip

\begin{proposition}
\label{propdoob}
We have  
\begin{equation}
\label{doobz1}
\PP_0(X^\Theta)=x \, | \, \Theta<\infty)=\tPP_0(X^\Theta)=x) ,\,  x\in \X \,,
\end{equation}
and  the laws of $N(X^\Theta)$ conditioned to be in $\X^+$, 
under $\PP$ and under $\tPP$ are equal. That is,
\begin{equation}
\label{doobx1}
\PP_0(X^\Theta=x \, | \, X^\Theta\in \X^+)=\tPP_0(X^\Theta=x \, | \, X^\Theta\in \X^+)\,,
x\in \X^+.
\end{equation}
Finally,
\begin{equation}
\label{dooby1}
\PP_0(N(X^\Theta)=x \, | \, X^\Theta\in \X^+)
=\tPP_0(N(X^\Theta)=x | X^\Theta\in \X^+ )\,, x\in \X^+.
\end{equation}
\end{proposition}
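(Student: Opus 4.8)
The plan is to deduce all three identities from the single path-level formula (\ref{doobx}), which already expresses the $Q$-matrix transitions as a Doob-type reweighting of the original ones, so that the substantive work has in effect been done. First I would specialize (\ref{doobx}) to the case where the path $(i_0,\dots,i_n)$ is itself an excursion, that is $n=\theta(x)$ and $i_0=\dots=0$ with no intermediate return to $0$. For such a path the event $\{X_1=i_1,\dots,X_n=i_n\}$ coincides with $\{X^\Theta=x\}$, precisely because an excursion returns to the origin for the first time exactly at its endpoint; this is the first-return condition built into the definition of $X^\Theta$. Hence (\ref{doobx}) becomes $\tPP_0(X^\Theta=x)=\PP_0(X^\Theta=x)\,\PP_0(\tau^+<\infty)^{-1}$. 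Since $\tau^+=\Theta$ when $X_0=0$, the right-hand side is exactly $\PP_0(X^\Theta=x\mid\Theta<\infty)$, which gives (\ref{doobz1}) for every $x\in\X$.

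For (\ref{doobx1}) I would use that under $\tPP$ the walk returns to $0$ almost surely, so that $\tPP_0(\Theta<\infty)=1$ and conditioning on $\{\Theta<\infty\}$ is vacuous. Writing the conditional probability on $\{X^\Theta\in\X^{+}\}$ as the ratio $\tPP_0(X^\Theta=x)/\tPP_0(X^\Theta\in\X^{+})$, both numerator and denominator carry the same factor $\PP_0(\Theta<\infty)^{-1}$ coming from (\ref{doobz1}). This factor cancels, leaving $\PP_0(X^\Theta=x)/\PP_0(X^\Theta\in\X^{+})$, which is $\PP_0(X^\Theta=x\mid X^\Theta\in\X^{+})$. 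Thus (\ref{doobx1}) holds.

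Finally, for (\ref{dooby1}) I would pass from single excursions to level numbers by summation. For a fixed level-number sequence $N$, the event $\{N(X^\Theta)=N\}$ restricted to $\X^{+}$ is the disjoint union over $x\in\X^{+}(N)$ of the events $\{X^\Theta=x\}$, so summing (\ref{doobx1}) over $x\in\X^{+}(N)$ yields the level-number identity termwise. Equivalently one may invoke Proposition \ref{propG0} and the explicit form (\ref{prob2G}): since the conditional law of $X^\Theta$ on $\X^{+}$ agrees under $\PP(\cdot\mid\Theta<\infty)$ and $\tPP$, every functional of it agrees as well, $N$ in particular.

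The computations are routine, and I expect no genuine obstacle; the only point to state with care is the event identification in the first step, namely that matching the full path of an excursion of length $\theta(x)$ is the same event as $\{X^\Theta=x\}$. Once (\ref{doobz1}) is in hand, the remaining two identities follow purely by normalizing and summing, because the Doob reweighting has already been carried out at the level of finite paths in (\ref{doobx}).
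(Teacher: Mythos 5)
Your proof is correct and follows essentially the same route as the paper: all three identities are deduced from the path-level reweighting (\ref{doobx}) (specialized to excursion paths, where $\{X_1=i_1,\dots,X_n=i_n\}$ is indeed the event $\{X^\Theta=x\}$), with (\ref{doobx1}) obtained by cancelling the common factor $\PP_0(\Theta<\infty)^{-1}$ and (\ref{dooby1}) by pushing the conditional law forward under $N$. The only cosmetic difference is that you get $\tPP_0(X^\Theta\in\X^+)=\PP_0(X^\Theta\in\X^+)\,\PP_0(\Theta<\infty)^{-1}$ by summing (\ref{doobz1}) over $\X^+$, whereas the paper verifies the same normalization identity explicitly, via $\tp_0\beta_0=p_0\beta_1$ using (\ref{descx}), (\ref{beta0}) and (\ref{wprobtr}).
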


\begin{proof}
Since $\tPP_0(\theta<\infty)=1$, then $X^\Theta$  
has only finite excursions $\tPP-$a.s.,  
and from (\ref{doobx}) one gets that the law of $X^\Theta$ 
under $\tPP$, is equal to the law of $X^\Theta$ under $\PP_0$ conditioned to 
$\Theta$ be finite. 

\medskip

Let us prove (\ref{doobx1}). From (\ref{descx}), (\ref{beta0}) and 
(\ref{wprobtr}) we get
$$
\tPP_0(X^\Theta\in \X^+)\PP_0(\tau^+<\infty)
=\tp_0 \beta_0=p_0 \beta_1=\PP_0(X^\Theta\in \X^+).
$$
Then, for  $x=(i_0,...,i_n)\in \X^+$ we obtain,
\begin{eqnarray*}
\tPP_0(X^\Theta=x | X^\Theta\in \X^+)&=&
\left(\prod_{i=0}^{n-1} p(i_{k-1},i_k)\right)
{\left(\tPP_0(X^\Theta\in \X^+)\PP_0(\tau^+<\infty)\right)}^{-1}\\
&=&\left(\prod_{i=0}^{n-1} p(i_{k-1},i_k)\right) \PP_0(X^\Theta\in \X^+)^{-1}\\
&=&\PP_0(X^\Theta=x |X^\Theta\in \X^+) .
\end{eqnarray*}
Then,  (\ref{doobx1}) holds.  So, also (\ref{dooby1}) is satisfied 
because (\ref{doobx1})  implies that 
the laws of the level counting processes  $N(X^\Theta)$ 
conditioned to $\{X^\Theta\in \X^+\}$, 
under $\tPP$ and $\PP$, are equal. 
\end{proof}

\medskip

Let us see what happens in the homogeneous case.  If $p=q$, then
the $Q-$chain is the same as the original one because $\beta_i=1$
for all $i\in \ZZ$. Let us assume $p<q$, then
$\beta_i=1$ and $\beta_{-i}=(p/q)^i$ for $i\ge 1$. For $i=0$ one has
$$
\beta_0=p+q\frac{p}{q}=2p.
$$
Hence, $\beta_{i+1}/\beta_i=1=\beta_{i-1}/\beta_i$, 
$\beta_{-(i+1)}/\beta_{-i}=p/q$ 
and $\beta_{-(i-1)}/\beta_{-i}=q/p$ for $i\ge 2$. 
Hence, the transition probabilities are 
$$
\tp_i=p, \; \tq_i=q \hbox{ and } \tp_{-i}=q, \;\;  \tq_{-i}=p \; \hbox{ for } i\ge 2.   
$$
On the other hand 
$$
\tp_1=p , \; \tq_1=q   \hbox{ and } \tp_{-1}=p\frac{q}{p}=q, \;\;  
tq_{-1}=q\frac{p}{q}=p.
$$
Finally
$$
\tp_0=p\frac{1}{2p}=\frac{1}{2} \hbox{ and } \tq_0=q\frac{p}{2pq}=\frac{1}{2}.
$$
Therefore in the $Q-$matrix we have $1/2$ the probability to start 
a positive or a negative excursion is $1/2$ and the excursions are sign symmetric. 

\medskip

When $p>q$, the analysis is the same as the one just made, 
we must only exchange the behavior of the positive excursions with
the behavior of negative excursions and exchange the roles of $p$ and $q$.
Therefore, for the homogeneous random walk we have 
$\tPP(X^\Theta=x)=\tPP(X^\Theta=-x)$ for all $x\in \X$.
Then, from (\ref{doobz1}) one gets,
\begin{equation}
\label{symmh}
\PP_0(X^\Theta=x \, | \, \Theta<\infty)
=\PP_0(X^\Theta=-x \, | \, \Theta<\infty) ,\,  x\in \X.
\end{equation}
So, under $\PP$ conditioned to $\Theta$ be finite, the distribution 
of all the quantities are sign symmetric. Thus,  
the distributions of $H(X^\Theta)$ and 
$-H(-X^\Theta)$ conditioned to $\{\Theta<\infty\}$ are the same, as well 
the distributions of $(N_h(X^\Theta): h\in \ZZ)$ 
and $(N_{-h}(-X^\Theta): h\in \ZZ)$ conditioned to $\{\Theta<\infty\}$.  

\medskip

In the homogeneous case when $p\le q$, 
$N(X^\Theta)$ is the linear fractional Galton Watson process of parameter $p$.
Since $\{X^\Theta\in \X^{+,U}\}=\{N_{H(X^\Theta)-1}=1\}$, 
then  $N(X^\Theta)$ conditioned to $\{X^\Theta\in \X^{+,U}\}$ 
is the linear fractional Galton Watson process
conditioned to $\{N_{H(X^\Theta)-1}=1\}$.  From the symmetry (\ref{symmh})
and  relation (\ref{vv3}) we get that the Vervaat transformed level counting 
process $N(\V(X^\Theta))$ conditioned to  $\{X^\Theta\in \X^{+,U}\}$ is
the time reversed linear fractional Galton Watson process of parameter $p$,
$$
(N_{H(X ^\Theta)-1-h}(X^\Theta)=0,..,H(X^\Theta)-1),
$$ 
conditioned to $\{N_{H(X^\Theta)-1}=1\}$.

\bigskip

\section*{Acknowledgments}
This work was supported by the Center for Mathematical Modeling, ANID 
Basal Project FB210005. 

\bigskip

\end{document}